


 \documentclass[final,1p,times]{elsarticle}


\usepackage{amssymb}





\makeatletter
\def\ps@pprintTitle{%
 \let\@oddhead\@empty
 \let\@evenhead\@empty
 \def\@oddfoot{\centerline{\thepage}}%
 \let\@evenfoot\@oddfoot}
\makeatother

\usepackage{xcolor}

\usepackage{graphics} 
\usepackage{epsfig} 
\usepackage{amsmath} 
\usepackage{amsthm}
\usepackage{amssymb}  
\usepackage{subfigure}

\bibliographystyle{abbrvnat}
\setcitestyle{authoryear,open={},close={}}

\newtheorem{theorem}{Theorem}
\newtheorem{corollary}{Corollary}
\newtheorem{remark}{Remark}
\newtheorem{example}{Example}
\newtheorem{definition}{Definition}
\newtheorem{assumption}{Assumption}

\begin{document}

\begin{frontmatter}


\title{\textbf{\LARGE Event-Triggered Control for Discrete-Time Delay Systems}}
%
\author[QU]{Kexue Zhang}\ead{kexue.zhang@queensu.ca}

\author[UC]{Elena Braverman}\ead{maelena@ucalgary.ca}
\author[UCLA]{Bahman Gharesifard}\ead{ghasifard@ucla.edu}

\address[QU]{Department of Mathematics and Statistics, Queen's University, Kingston, Ontario K7L 3N6, Canada}

\address[UC]{Department of Mathematics and Statistics, University of Calgary, Calgary, Alberta T2N 1N4, Canada}

\address[UCLA]{Department of Electrical and Computer Engineering, University of California, Los Angeles, CA 90095, USA}

\begin{abstract}
This study focuses on event-triggered control of nonlinear discrete-time systems with time delays. Based on a Lyapunov-Krasovskii type input-to-state stability result, we propose a novel event-triggered control algorithm that works as follows. The control inputs are updated only when a certain measurement error surpasses a dynamical threshold depending on both the system states and the evolution time. Sufficient conditions are established to ensure that the closed-loop system maintains {its asymptotic stability}. It is shown that the time-dependent portion in the dynamical threshold is essential to derive the lower bound of the times between two consecutive control updates. As a special case of our results, we demonstrate the performance of the designed event-triggering algorithm for a class of linear control systems with time delays. Numerical simulations are provided to demonstrate the effectiveness of our algorithm and theoretical results.
\end{abstract}

\begin{keyword}
Discrete-time system \sep time delay \sep event-triggered control \sep stability \sep Lyapunov-Krasovskii functional

\end{keyword}

\end{frontmatter}

\section{Introduction}\label{Sec1}

{T}{he} mechanism of event-triggered control (METC) is to update the control signals only when a certain measurement error violates a predesigned triggering condition. The advantage of METC is to {reduce the transmission load for control updates} while preserving the desired control performance. Recent years have witnessed wide applications of METC in the field of control engineering, such as, synchronization and consensus of networked systems, distributed optimization, fault detection, and sensor schedule (see, e.g., \cite{ML-2010,ZPJ-TFL:2015,CN-EG-JC:2019} and references therein).


Discrete-time systems are frequently encountered in digital signal processing, digital control, optimization algorithms, and digital communications (see, e.g., \cite{KJA-BW:1984,KO:1995}). In the past few years, event-triggered control for discrete-time systems has drawn lots of attention due to the advantages of METC. Numerous event-triggering algorithms have been successfully developed for many control problems (e.g., \cite{LJ-VO:2014,WW-SR-DG-SL:2016,NST-INK-KP:2016,AE-DVD-KJK:2010,PZ-TL-ZPJ:2017,SH-DY-XY-XX-YM:2016}). When the system's evolution depends on not only the current states but also the states at some previous times, examples of which can be found in coordination of multi-vehicles and  control systems with neural network inputs, the discrete-time system falls in the category of time-delay systems (see, e.g., \cite{EF-2014}). The system augmentation method, that is, converting a discrete-time system with time delays into a higher-dimensional delay-free system, has been proved to be powerful to apply results of delay-free discrete systems to the analysis of discrete-time delay systems (see, e.g., \cite{KJA-BW:1984}). Nevertheless, the generalization of the existing results on event-triggered control from discrete delay-free systems to scenarios with delay by a direct use of the system augmentation approach increases the system dimension, and requires the memory of system states at some past times. This may render the implementation of METC on discrete-time delay systems difficult. Therefore, it is crucial to study event-triggered control for discrete-time systems with time delays independently (see, e.g., \cite{BL-DJH-CZ-ZS:2018,QL-BS-ZW-TH-JL:2019,SH-XY-YZ-EGT:2012,BL-DJH-ZS-JH:2019}).

To distinguish event-triggered control from the traditional feedback control, the time difference between two consecutive control updates should be bigger than one so that the advantage of METC on efficiency improvement can be preserved; such an event-triggering algorithm is called \emph{nontrivial} (see \cite{AE-DVD-KJK:2010}). One of the main challenges in the area of event-triggered control for discrete-time systems is to ensure the nontriviality of the proposed event-triggering conditions. Various event-triggering control schemes have been successfully designed for discrete-time systems without time delays, and verifiable conditions to guarantee the nontriviality have been derived (see, e.g., \cite{AE-DVD-KJK:2010,PZ-TL-ZPJ:2017,SH-DY-XY-XX-YM:2016}). However, the study of event-triggered control for discrete-time systems with time delays is challenging, and we are only aware of very few results reported. For example, a dynamic event-triggered control algorithm was proposed in \cite{QL-BS-ZW-TH-JL:2019} to synchronize a type of discrete-time dynamical networks with time delays. Event-triggered guaranteed cost control for discrete-time systems was studied in \cite{SH-XY-YZ-EGT:2012}. Time-varying transmission delays were considered in the designed event-triggered controllers, but the uncontrolled systems were free of time delays. Unfortunately, nontriviality of the proposed event-triggering algorithms in \cite{QL-BS-ZW-TH-JL:2019,SH-XY-YZ-EGT:2012} was not discussed. By using Lyapunov function method and Razumikhin technique, several event-triggering schemes were constructed in \cite{BL-DJH-ZS-JH:2019} to stabilize a class of nonlinear discrete-time systems with time-varying delays. {The designed event-triggering conditions are nontrivial, but require the knowledge of the exact delay bound. Hence, the results in \cite{BL-DJH-ZS-JH:2019} cannot be applied to systems with bounded time delays if the information on the delay bound is unavailable.} It can be seen that the study of event-triggered control for discrete-time delay systems is to a large extent open, and derivation of sufficient conditions to guarantee the nontriviality of the event-triggering schemes is challenging.

Motivated by the above discussion, we study event-triggered control problem of discrete-time systems with time delays. The contributions of this research are summarized as follows.

\textit{Statement of Contributions.} We propose a novel event-triggered control algorithm for discrete-time delay systems, which is motivated  by a Lyapunov-Krasovskii input-to-state stability result. The designed event-triggering scheme generates control update when the measurement error reaches a dynamic threshold depending on both the system states and the evolution time. The triggering condition includes three parameters which can be tuned to ensure asymptotic stability of the control system. Sufficient conditions on these parameters are also derived to guarantee non-existence of trivial event-time sequence. Compared with the existing results, the proposed event-triggering algorithm is easy to employ, our results are applicable to discrete-time systems with time delays, and a lower bound of the inter-event times is guaranteed to be bigger than one.

The rest of this paper is structured as follows. Section~\ref{Sec02} introduces some preliminaries and a result of input-to-state stability for discrete-time delay systems. We propose a novel event-triggering scheme and establish the main results in Section~\ref{Sec03}. As an application, we investigate a type of discrete-time linear systems in Section~\ref{Sec04}. Two examples with their numerical simulations are investigated in Section~\ref{Sec05}. In Section~\ref{Sec06}, we provide a summary and discuss  possible directions for the future research.

\section{Preliminaries}\label{Sec02}
Let $\mathbb{N}$ denote the set of positive integers, $\mathbb{Z}^+$ the set of nonnegative integers, $\mathbb{R}$ the set of real numbers, $\mathbb{R}^+$ the set of nonnegative reals, and $\mathbb{R}^n$ the $n$-dimensional real space equipped with the Euclidean norm denoted by $\|\cdot\|$. For an $n\times n$ matrix $A$, we use $\|A\|$ to represent its induced matrix norm and $A^T$ to denote its transpose. For a given constant $\tau\in\mathbb{Z}^+$, let $\mathbb{N}_{-\tau}=\{-\tau,-\tau+1,...,-1,0\}$, $\mathbb{N}_{-\tau \setminus 0}=\mathbb{N}_{-\tau}\setminus \{0\}$, $\mathcal{C}_{\tau}=\{\phi:\mathbb{N}_{-\tau}\rightarrow \mathbb{R}^n\}$, and $\mathcal{C}_{\tau\setminus 0}=\{\phi:\mathbb{N}_{-\tau\setminus 0}\rightarrow \mathbb{R}^n\}$.
For a given $\phi \in \mathcal{C}_{\tau}$, we define a function $\phi_{\tau\setminus 0}$ as $\phi_{\tau\setminus 0}(s)=\phi(s)$ for $s\in \mathbb{N}_{-\tau\setminus 0}$, that is, $\phi_{\tau\setminus 0}\in\mathcal{C}_{\tau\setminus 0}$. We then define two norms on $\phi$:
\[
\|\phi\|_{\tau}= \max_{s\in\mathbb{N}_{-\tau}}\left\{\|\phi(s)\|\right\}
\ \mathrm{and} \  
\|\phi\|_{\tau\setminus 0}=\max_{s\in\mathbb{N}_{-\tau\setminus 0}}\left\{\|\phi(s)\|\right\}.
\]
For a given function $u:\mathbb{Z}^+\rightarrow \mathbb{R}^m$ and $k\in\mathbb{Z}^+\setminus \{0\}$, we define $\|u\|_{[k]}=\max_{s\in\{1,2,...,k\}}\{\|u(s)\|\}$.

Next, we recall some function classes. A continuous function $\alpha:\mathbb{R}^+\rightarrow\mathbb{R}$ is said to be of class $\mathcal{K}$ and we write $\alpha\in\mathcal{K}$, if $\alpha$ is strictly increasing and $\alpha(0)=0$. If $\alpha\in\mathcal{K}$ and also $\alpha(s)\rightarrow\infty$ as $s\rightarrow\infty$, we say that $\alpha$ is of class $\mathcal{K}_{\infty}$ and we write $\alpha\in\mathcal{K}_{\infty}$. A continuous function $\beta:\mathbb{R}^+\times\mathbb{R}^+ \rightarrow\mathbb{R}^+$ is said to be of class $\mathcal{KL}$ and we write $\beta\in \mathcal{KL}$, if the function $\beta(\cdot,t)\in\mathcal{K}$ for each fixed $t\in\mathbb{R}^+$, and the function $\beta(s,\cdot)$ is decreasing and $\beta(s,t)\rightarrow 0$ as $t\rightarrow \infty$ for each fixed $s\in \mathbb{R}^+$.

Consider the discrete-time control system with time delays:
\begin{eqnarray}\label{sys}
\left\{\begin{array}{ll}
x(k+1)=f(x_k,u(k)) \cr
x_0=\varphi
\end{array}\right. ,
\end{eqnarray}
where $x(k)\in \mathbb{R}^n$ is the state and $u(k)\in \mathbb{R}^m$ is the input, for positive integers $n$ and $m$. Given $k\in \mathbb{Z}^+$, the function $x_k:\mathbb{N}_{-\tau}\rightarrow \mathbb{R}^n$ is defined as $x_k(s)=x(k+s)$ for $s\in\mathbb{N}_{-\tau}$, and the integer $\tau\geq 0$ is the maximum involved delay. We assume that $f:\mathcal{C}_{\tau}\times \mathbb{R}^m\rightarrow \mathbb{R}^n$ satisfies $f(0,0)=0$, which implies that system~\eqref{sys} admits the trivial solution (zero solution). The function $\varphi\in\mathcal{C}_{\tau}$ is the initial function, and $k=0$ is the initial time. {The notation of $x_k$ is similar to the continuous-time case for functional differential equations (see, e.g., \cite{JKH:1977}). System~\eqref{sys} is a general type of discrete-time delay systems and includes systems without time delays, systems with single or multiple discrete delays, as well as systems with time-varying bounded delays.}

The notion of input-to-state stability, introduced in \cite{EDS:1989}, and the input-to-state stability results play a significant role in designing our event-triggered control algorithm. The definition of input-to-state stability for system~\eqref{sys} is stated as follows.

\begin{definition}[see \cite{BL-DJH:2009}]
System~\eqref{sys} is said to be input-to-state stable (ISS) if there exist functions $\beta\in\mathcal{KL}$ and $\gamma\in\mathcal{K}$ such that, for each initial function $\varphi\in\mathcal{C}_{\tau}$ and input function $u:\mathbb{Z}^+\rightarrow\mathbb{R}^m$, the corresponding solution to \eqref{sys} satisfies
$$\|x(k)\|\leq \beta\left(\|\varphi\|_{\tau},k\right)+\gamma\left(\|u\|_{[k-1]}\right)~ \mathrm{~for~all~} k\in \mathbb{N}.$$
\end{definition}

Next, we introduce an ISS result for system~\eqref{sys}.
\begin{theorem}\label{Th.ISS}
Suppose there exist $V_1:\mathbb{R}^n \rightarrow \mathbb{R}^+$, $V_2:\mathcal{C}_{\tau\setminus 0}\rightarrow \mathbb{R}^+$, functions $\alpha_1,\alpha_2,\alpha_3\in \mathcal{K}_{\infty}$, $\chi\in\mathcal{K}$, and a constant $\mu\in[0,1)$, such that, for all $\phi\in\mathcal{C}_{\tau}$,
\begin{itemize}
\item[(i)] $\alpha_1(\|\phi(0)\|)\leq V_1(\phi(0)) \leq \alpha_2(\|\phi(0)\|)$;

\item[(ii)] $0\leq V_2(\phi_{\tau\setminus 0})\leq \alpha_3(\|\phi\|_{\tau\setminus 0})$;

\item[(iii)] $V(\phi):=V_1(\phi(0))+V_2(\phi_{\tau\setminus 0})$ satisfies 
\[V({\phi^*})-V(\phi)\leq -\mu V(\phi) + \chi (\|u\|),
\]
where the function ${\phi^*}:\mathbb{N}_{-\tau}\rightarrow\mathbb{R}^n$ is defined as follows
\begin{eqnarray*}
{\phi^*}(s)=\left\{\begin{array}{ll}
f(\phi,u), & \textrm{if } s=0 \cr
\phi(s+1), & \textrm{if } s\in \mathbb{N}_{-\tau\setminus 0}
\end{array}\right. .
\end{eqnarray*}
\end{itemize}
Then, system~\eqref{sys} is ISS.
\end{theorem}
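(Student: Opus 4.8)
The plan is to establish the ISS estimate by first converting the one-step Lyapunov-Krasovskii inequality in (iii) into a discrete comparison estimate along the trajectory, and then decomposing the resulting bound into a decaying term (depending on the initial data) and a term governed by the supremum of the inputs. First I would fix an initial function $\varphi\in\mathcal{C}_\tau$ and an input $u:\mathbb{Z}^+\to\mathbb{R}^m$, and write $V(k):=V(x_k)=V_1(x(k))+V_2((x_k)_{\tau\setminus 0})$ for $k\in\mathbb{Z}^+$. Applying (iii) with $\phi=x_k$ (so that $\phi^*=x_{k+1}$, using $x(k+1)=f(x_k,u(k))$) gives
\[
V(k+1)\leq (1-\mu)V(k)+\chi(\|u(k)\|)\qquad\text{for all }k\in\mathbb{Z}^+.
\]
Iterating this scalar recursion from $0$ to $k-1$ yields
\[
V(k)\leq (1-\mu)^k V(0)+\sum_{j=0}^{k-1}(1-\mu)^{k-1-j}\chi(\|u(j)\|)
\leq (1-\mu)^k V(0)+\frac{1}{\mu}\,\chi\bigl(\|u\|_{[k-1]}\bigr),
\]
where the geometric sum is bounded using $\mu\in(0,1)$; the degenerate case $\mu=0$ needs a separate (trivial) remark since then the estimate in (iii) forces $V$ to be nonincreasing up to the input term, but in fact for ISS one typically assumes $\mu>0$, so I would either note $\mu=0$ gives only stability, not ISS, or restrict attention to $\mu\in(0,1)$ as the theorem's hypotheses implicitly require for the claimed conclusion.

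Next I would extract the pointwise bound on $\|x(k)\|$. From (i), $\alpha_1(\|x(k)\|)\leq V_1(x(k))\leq V(k)$, so
\[
\|x(k)\|\leq \alpha_1^{-1}\!\left((1-\mu)^k V(0)+\tfrac{1}{\mu}\chi\bigl(\|u\|_{[k-1]}\bigr)\right)
\leq \alpha_1^{-1}\!\bigl(2(1-\mu)^k V(0)\bigr)+\alpha_1^{-1}\!\bigl(\tfrac{2}{\mu}\chi(\|u\|_{[k-1]})\bigr),
\]
using the standard weak-triangle fact that $\alpha_1^{-1}(a+b)\leq \alpha_1^{-1}(2a)+\alpha_1^{-1}(2b)$ for $a,b\geq 0$ (which holds since $\alpha_1^{-1}\in\mathcal{K}_\infty$ and $\max\{a,b\}\leq a+b\leq 2\max\{a,b\}$). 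Then I bound $V(0)=V_1(\varphi(0))+V_2(\varphi_{\tau\setminus 0})\leq \alpha_2(\|\varphi(0)\|)+\alpha_3(\|\varphi\|_{\tau\setminus 0})\leq \alpha_2(\|\varphi\|_\tau)+\alpha_3(\|\varphi\|_\tau)=:\bar\alpha(\|\varphi\|_\tau)$, using (i), (ii), and $\|\varphi(0)\|,\|\varphi\|_{\tau\setminus 0}\leq\|\varphi\|_\tau$; here $\bar\alpha=\alpha_2+\alpha_3\in\mathcal{K}_\infty$.

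Finally I would assemble the $\mathcal{KL}$ and $\mathcal{K}$ functions. Define $\beta(r,k):=\alpha_1^{-1}\bigl(2(1-\mu)^k\bar\alpha(r)\bigr)$ and $\gamma(r):=\alpha_1^{-1}\bigl(\tfrac{2}{\mu}\chi(r)\bigr)$; one checks $\gamma\in\mathcal{K}$ (composition of $\mathcal{K}$ functions) and $\beta\in\mathcal{KL}$ (for fixed $k$ it is $\mathcal{K}$ in $r$ as a composition of $\mathcal{K}_\infty$ functions, and for fixed $r$ it is decreasing to $0$ in $k$ because $(1-\mu)^k\to 0$ and $\alpha_1^{-1}$ is continuous with $\alpha_1^{-1}(0)=0$; continuity of $\beta$ in the pair $(r,k)$, or rather the usual convention of extending $k$ to a real variable, is a routine technical point). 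Combining with the displayed bound on $\|x(k)\|$ gives $\|x(k)\|\leq\beta(\|\varphi\|_\tau,k)+\gamma(\|u\|_{[k-1]})$ for all $k\in\mathbb{N}$, which is exactly the ISS estimate. The only mildly delicate step is the treatment of $\mu$ near the boundary $\mu=0$ and keeping the $\mathcal{KL}$/$\mathcal{K}$ class memberships clean under the $a+b\mapsto 2a,2b$ splitting; everything else is a direct iteration of the scalar inequality from (iii).
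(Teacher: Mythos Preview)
Your proposal is correct and follows precisely the ``standard Lyapunov arguments'' route the paper itself points to: the paper does not give a full proof but only records the sandwich bound $\alpha_1(\|\phi(0)\|)\le V(\phi)\le (\alpha_2+\alpha_3)(\|\phi\|_\tau)$ and then cites \cite{ZPJ-YW:2001,RHG-ML-ART:2012} for the remaining details, which amount exactly to your iteration of the one-step inequality, the geometric-sum bound, and the $\alpha_1^{-1}(a+b)\le\alpha_1^{-1}(2a)+\alpha_1^{-1}(2b)$ splitting. Your flag on the boundary case $\mu=0$ is also apt; the theorem as stated allows $\mu\in[0,1)$, but the ISS conclusion genuinely requires $\mu>0$.
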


The above ISS result is based on the method of Lyapunov-Krasovskii functionals. The Lyapunov-Krasovskii candidate is partitioned into a function $V_1$ of the current state and a functional $V_2$ depending only on the states at some past times. Such decomposition has been widely used in the stability analysis of discrete-time systems with time delays (see, e.g., \cite{HG-TC:2007,XM-JL-BD-HG:2010}). Define a new class $\mathcal{K}_{\infty}$ function $\bar{\alpha}_2:=\alpha_2+\alpha_3$. It follows from conditions (i) and (ii) that
\begin{equation}\label{ISS.condition}
\alpha_1(\|\phi(0)\|)\leq V( \phi ) \leq \bar{\alpha}_2(\|\phi\|_{\tau}),
\end{equation}
for all $\phi\in\mathcal{C}_{\tau}$. Starting from a Lyapunov functional $V$ in~\eqref{ISS.condition} and condition (iii), the conclusion of Theorem~\ref{Th.ISS} can be obtained by using standard Lyapunov arguments (see, e.g.,~\cite{ZPJ-YW:2001} with detailed discussions for discrete-time systems without time delays, and \cite{RHG-ML-ART:2012} with similar discussions for a class of discrete-time delay systems). Therefore, the proof of Theorem~\ref{Th.ISS} is omitted. It is worthwhile to mention that the decomposition of $V$ into $V_1$ and $V_2$ is not necessary to guarantee the ISS property. Nevertheless, the function portion $V_1$ coupled with condition (iii) in Theorem~\ref{Th.ISS} is essential for developing our event-triggering algorithm.

\section{Event-Triggered Control Algorithm}\label{Sec03}
Consider feedback control system~\eqref{sys} with a sampled-data implementation
\begin{eqnarray}\label{ETC.sys}
\left\{\begin{array}{ll}
x(k+1)=f(x_k,u(k)) \cr
u(k)=p(x(k_i)),~k\in[k_{i},k_{i+1})\cr
x_0=\varphi
\end{array}\right. ,
\end{eqnarray}
where $u:\mathbb{Z}^+\rightarrow \mathbb{R}^m$ is a feedback control input, $p:\mathbb{R}^n\rightarrow \mathbb{R}^m$ is the feedback control law and satisfies $p(0)=0$. Thus, system~\eqref{ETC.sys} admits a trivial solution as $f(0,0)=0$. The time sequence $\{k_i\}_{i\in \mathbb{N}}\subset \mathbb{Z}^+$ is a set of discrete moments when the control signals are updated and will be determined by a certain execution rule based on the state measurement.

To introduce our execution rule, we first define the state measurement error
\[
e(k)=x(k_i)-x(k), \qquad k\in[k_{i},k_{i+1}).
\]
We have that
\[
p(x(k_i))=p(e(k)+x(k)),
\]
where $k\in[k_{i},k_{i+1})$, and the control system~\eqref{ETC.sys} can be written as
\begin{eqnarray}\label{CL.sys}
\left\{\begin{array}{ll}
x(k+1)=g(x_k,e(k)) \cr
x_0=\varphi
\end{array}\right. ,
\end{eqnarray}
with $g(x_k,e(k)):=f(x_k,p(e(k)+x(k)))$ for all $k\in\mathbb{Z}^+$.

Throughout this paper, we make the following assumption on system~\eqref{CL.sys}.
\begin{assumption}\label{Assumption}
Suppose there exist $V_1:\mathbb{R}^n \rightarrow \mathbb{R}^+$, $V_2:\mathcal{C}_{\tau\setminus 0}\rightarrow \mathbb{R}^+$, functions $\alpha_1,\alpha_2,\alpha_3\in \mathcal{K}_{\infty}$, $\chi\in\mathcal{K}$, and a constant $\mu\in[0,1)$, such that the conditions of Theorem \ref{Th.ISS} hold for system~\eqref{CL.sys} with $f$ and $u$ replaced by $g$ and $e$, respectively.
\end{assumption}

Under the above assumption, system~\eqref{CL.sys} is globally asymptotically stable without the measurement error $e$ and ISS with respect to $e$. The objective of this study is to design a feasible execution rule to determine the sequence $\{k_i\}_{i\in \mathbb{N}}$ {so that the closed-loop system~\eqref{CL.sys} preserves its global asymptotic stability.}
%
\begin{definition}[Attractivity]
The {trivial solution} of system~\eqref{CL.sys} is said to be globally attractive (GA), if 
\[
\lim_{k\rightarrow\infty} \|x(k)\|=0 ~\textrm{ for any } \varphi\in\mathcal{C}_{\tau},
\]
where $x(k):=x(k,0,\varphi)$ is {the solution} of~\eqref{CL.sys}.
\end{definition}

\begin{definition}[Global Asymptotic Stability]
The {trivial solution} of system~\eqref{CL.sys} is said to be globally asymptotically stable (GAS) if it is stable and globally attractive.
\end{definition}

To derive the time sequence $\{k_i\}_{i\in \mathbb{N}}$, we enforce $e$ to satisfy
\begin{equation}\label{restrict}
\chi(\|e\|) \leq \sigma \alpha_1(\|x\|) + \chi\left( a(1-b)^k \right),
\end{equation}
for some constants $\sigma\geq 0$, $a\geq 0$, and $1>b>0$ to be determined later. The updating of the control input $u$ is triggered by the following execution rule (or event)
\begin{equation}\label{event}
\chi(\|e\|) > \sigma \alpha_1(\|x\|) + \chi\left( a(1-b)^k \right).
\end{equation}
The event times are the moments when the event occurs, i.e.,
\begin{equation}\label{et.time}
k_{i+1}=\min\left\{ k> k_i \mathrel{\big|} \chi(\|e(k)\|) > \sigma \alpha_1(\|x(k)\|) + \chi\left( a(1-b)^k \right) \right\}.
\end{equation}
Execution rule~\eqref{event} works as follows. At each event time $k_i$, the input signals are updated according to the feedback control law introduced in system~\eqref{ETC.sys}, and the measurement error $e$ is set to zero. The control input $u$ remains constant in the following time steps until the error $e$ violates the requirement~\eqref{restrict} at the next event time $k_{i+1}$. Then, the control input is renewed as $u(k)=p(x(k_{i+1}))$, and the measurement error is reset to zero again. This process is repeated for every time period in between consecutive events, i.e., $[k_i,k_{i+1})$ for $k\in \mathbb{N}$. As the sequence of event times $\{k_i\}_{i\in\mathbb{N}}$ is defined in an implicit manner, it is possible for the input $u$ to be updated at every time step, that is, $k_{i+1}-k_i=1$ for $k\in\mathbb{N}$. For this scenario, the event-triggered control system~\eqref{ETC.sys} reduces to the traditional feedback control system, and the advantages of the event-triggered control mechanism vanish. Therefore, in order to preserve the efficiency of event-triggered control for discrete-time systems, it is important to secure that the lower bound of the \textit{inter-execution times} $\{k_{i+1}-k_i\}_{i\in\mathbb{N}}$ is bigger than one (i.e., $k_{i+1}-k_i \geq 2$ for all $i\in\mathbb{N}$, that is, the control input $u$ is updated at most every other time step). We call such a sequence of event times {\textit{strongly nontrivial}. If there exists at least one $i\in\mathbb{N}$ so that $k_{i+1}-k_i \geq 2$, then the sequence $\{k_i\}_{i\in\mathbb{N}}$ is called \textit{weakly nontrivial}. It can be observed that an event-triggering algorithm with weakly nontrivial sequence of event times may not be able to secure a significant advantage of event-triggered control over the conventional feedback control in reducing control updates. Because a weakly nontrivial sequence may only allow $k_{i+1}-k_i \geq 2$ for some finite numbers of inter-execution times. Therefore, we focus on the existence of strongly nontrivial sequence of event times according to the proposed event-triggering algorithm.}

{
\begin{remark}\label{remark0}
It is worthwhile to mention that the triggering condition~\eqref{event} is inspired by Theorem~\ref{Th.ISS}. If we enforce $\chi(\|e\|) > \sigma \alpha_1(\|x\|)$, then condition (iii) of Theorem~\ref{Th.ISS} ensures exponential convergence of the Lyapunov functional candidate. Hence, we can conclude from Theorem~\ref{Th.ISS} that the closed-loop system is asymptotically stable. However, nontrivial control updates cannot be guaranteed (see Section~\ref{Sec05} for an example). Therefore, the triggering condition needs to be revised in order to enlarge the inter-execution times. The time-dependent portion in~\eqref{event} plays an essential role to ensure the nontrivial control updates, see the proof of Theorem~\ref{Th1}.
\end{remark}
}

In what follows, we will establish several sufficient conditions to guarantee that closed-loop system~\eqref{CL.sys} with event times determined by~\eqref{et.time} is still {asymptotically stable} and assures the notriviality of the sequence of event times, simultaneously. {Our main results mainly rely on the Lipschitz conditions of the functions that appear in Assumption~\ref{Assumption}.}


{
\begin{definition}[Lipschitz]\label{Lipschitz} 
The function $f:\mathcal{C}_{\tau}\times\mathbb{R}^m\rightarrow\mathbb{R}^n$ is called locally Lipschitz, if for each $\psi\in \mathcal{C}_{\tau}$ and $u\in\mathbb{R}^m$ there exist positive constants $l_{1,1}$, $l_{1,2}$, $l_2$, and $R$ such that 
\begin{align}\label{Lipschitz02}
\|f(\phi,v)-f(\psi,u)\| \leq & l_{1,1} \|\phi(0)-\psi(0)\| + l_{1,2} \|\phi-\psi\|_{\tau\setminus 0} \cr
                                & + l_2 \|v-u\|
\end{align}
for all $\phi$ in the open ball of center $\psi$ and radius $R$:
\[
\mathcal{B}^{\tau}_{R}(\psi):=\{\phi\in\mathcal{C}_{\tau} \mid \|\phi-\psi\|_{\tau} < R  \}
\]
and all $v$ in the open ball of center $u$ and radius $R$:
\[
\mathcal{B}_{R}(u):= \{v\in\mathbb{R}^m \mid \|v-u\| < R  \}.
\]
The Lipschitz condition for single-variable functions can be derived from~\eqref{Lipschitz02} with the first argument of $f$ fixed.
\end{definition}
}
With Definition~\ref{Lipschitz}, we propose the second assumption.
\begin{assumption}\label{Assumption2}
The following Lipchitz conditions are satisfied.
\begin{itemize}
\item $\alpha^{-1}_1$ (i.e., the inverse of $\alpha_1$), $\chi$, and $p$ are locally Lipschitz;

\item {$(\phi,u)\mapsto f(\phi,u)$ is locally Lipschitz.}

\end{itemize}
\end{assumption}

To state our main results, we let $c:=\mu-\sigma$ and $\tilde{M}:=\alpha_2(\|\varphi(0)\|)+\alpha_3(\|\varphi\|_{\tau\setminus 0})$. With Assumption~\ref{Assumption2}, we denote $L$ as the Lipschitz constant of $\chi$ on the closed interval $[0,a]$, and then, for any given constant $\xi$ so that $0<\xi<b$, we define
\begin{align}\label{Mbar}
\bar{M}=\left\{\begin{array}{ll}
\frac{aL}{|c-b|},~&\textrm{if}~ b\not=c \cr
\frac{aL}{(1-c)\left(\ln({1-\xi}) -  \ln({1-c})\right)},~&\textrm{if}~ b=c<1
\end{array}\right. 
\end{align}
and $M:=\bar{M} + \tilde{M}$. We further denote by $L_1$ the Lipschitz constant of $\alpha^{-1}_1$ on the interval $[0,M]$. 
Define function $\bar{f}:\mathcal{C}_{\tau}\times \mathbb{R}^m\rightarrow \mathbb{R}^n$ as 
\[
\bar{f}(\phi,u):=\phi(0)-f(\phi,u),
\]
for $\phi\in\mathcal{C}_{\tau}$ and $u\in\mathbb{R}^m$. {The function $(\phi,x)\mapsto \bar{f}(\phi,p(x))$  is locally Lipschitz, since $p$ is locally Lipschitz. Given this, we let $\bar{L}_{1,1}$, $\bar{L}_{1,2}$, and $\bar{L}_2$ be  Lipschitz constants such that 
\begin{equation}\label{Lipschitz_fbar}
\|\bar{f}(\phi,p(x))\|\leq \bar{L}_{1,1}\|\phi(0)\| + \bar{L}_{1,2}\|\phi\|_{\tau\setminus 0}+\bar{L}_2\|x\|,
\end{equation}
where  $\phi\in\mathcal{B}^{\tau}_{R}(0)$ and $x\in\mathcal{B}_R(0)$ with radius $R=\alpha^{-1}_1(M)$. It should be noted that $\tilde{M}$, $\bar{M}$, and the above-named Lipschitz constants depend on the initial condition of system~\eqref{ETC.sys}.}



Now we are ready to state our first main result.

\begin{theorem}\label{Th1}
Suppose that both Assumptions~\ref{Assumption} and~\ref{Assumption2} hold. The event times $\{k_i\}_{i\in\mathbb{N}}$ in~\eqref{ETC.sys} are determined by~\eqref{et.time} with $\mu>\sigma\geq 0$, $a\geq 0$, and $1>b>0$. Then,
\begin{itemize}
\item[1)] if $a=0$ and $\sigma>0$, the closed-loop system~\eqref{CL.sys} is GAS;

\item[2)] {if $a>0$, the closed-loop system~\eqref{CL.sys} is also GAS. If we further assume that $b<c$, and parameters $a$, $b$, and $\sigma$ satisfy the following inequality
\begin{equation}\label{condition}
a \geq \frac{M L_1 }{1-b}\left( \bar{L}_{1,1} +\frac{\bar{L}_{1,2}}{(1-b)^{\tau}} + \bar{L}_2 \right),
\end{equation}
then the sequence of event times $\{k_i\}_{i\in\mathbb{N}}$ is strongly nontrivial, that is, $k_{i+1}-k_i \geq 2$ for all $i\in\mathbb{Z}^+$. }

\end{itemize}
\end{theorem}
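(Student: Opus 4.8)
The plan is to run a discrete Lyapunov--Krasovskii argument driven by condition~(iii) of Theorem~\ref{Th.ISS} and then to analyse the single time step immediately after each event. \emph{The basic recursion.} Along any solution of the closed-loop system~\eqref{CL.sys}, Assumption~\ref{Assumption} (condition~(iii) of Theorem~\ref{Th.ISS} with $f,u$ replaced by $g,e$) gives $V(x_{k+1})-V(x_k)\le-\mu V(x_k)+\chi(\|e(k)\|)$ for every $k\in\mathbb{Z}^+$. Because $k_{i+1}$ in~\eqref{et.time} is the \emph{first} instant after $k_i$ at which~\eqref{event} holds, the complementary inequality~\eqref{restrict} holds at every $k$ (trivially at the event times, where $e=0$). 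Using condition~(i), which gives $\alpha_1(\|x(k)\|)\le V_1(x(k))\le V(x_k)$, one obtains
\[
V(x_{k+1})\le(1-c)\,V(x_k)+\chi\bigl(a(1-b)^k\bigr),\qquad k\in\mathbb{Z}^+,
\]
with $c=\mu-\sigma\in(0,1)$.

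\emph{Asymptotic stability.} For part~1), $a=0$, the forcing term disappears, $V(x_k)\le(1-c)^kV(\varphi)$, and~\eqref{ISS.condition} gives both $\|x(k)\|\le\alpha_1^{-1}\bigl((1-c)^kV(\varphi)\bigr)\to0$ and $\|x(k)\|\le\alpha_1^{-1}\bigl(\bar{\alpha}_2(\|\varphi\|_\tau)\bigr)$, so GAS follows on picking $\delta$ with $\alpha_1^{-1}(\bar{\alpha}_2(\delta))<\varepsilon$. For part~2), $a>0$: since $\chi$ is locally Lipschitz, $\chi(a(1-b)^k)\le La(1-b)^k$ with $L$ the Lipschitz constant of $\chi$ on $[0,a]$; iterating the recursion gives $V(x_k)\le(1-c)^kV(\varphi)+\sum_{j=0}^{k-1}(1-c)^{k-1-j}\chi(a(1-b)^j)$, and the geometric sum is bounded by $\bar{M}(1-b)^k$ when $b<c$, by $\bar{M}(1-c)^k$ when $b>c$, and by $\bar{M}(1-\xi)^k$ when $b=c$ (there the sum equals $k(1-c)^{k-1}$ and one uses $\sup_{k}kq^{k}\le\bigl(e\ln(1/q)\bigr)^{-1}$ with $q=(1-c)/(1-\xi)<1$); this is precisely the origin of the three cases in~\eqref{Mbar}. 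In each case the accumulated term tends to $0$, so $V(x_k)\to0$ and hence $\|x(k)\|\to0$, while $V(x_k)\le\tilde{M}+\bar{M}=M$ for all $k$; combining convergence with this uniform bound, and arguing as in part~1), one gets GAS.

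\emph{Strong nontriviality.} Fix $i\in\mathbb{Z}^+$; it is enough to show~\eqref{event} fails at $k=k_i+1$, for then $k_{i+1}\ge k_i+2$. As $e(k_i)=0$, we have $x(k_i+1)=g(x_{k_i},0)=f(x_{k_i},p(x(k_i)))$, hence $e(k_i+1)=x(k_i)-x(k_i+1)=\bar{f}(x_{k_i},p(x(k_i)))$. With $b<c$, the bound from the previous paragraph is $V(x_k)\le M(1-b)^k$, so $\alpha_1(\|x(k)\|)\le M(1-b)^k$ and, $\alpha_1^{-1}$ being Lipschitz on $[0,M]$ with constant $L_1$ and $\alpha_1^{-1}(0)=0$,
\[
\|x(k)\|\le\alpha_1^{-1}\bigl(M(1-b)^k\bigr)\le L_1M(1-b)^k.
\]
In particular $\|x(k)\|\le R=\alpha_1^{-1}(M)$, so (also using~\eqref{ISS.condition} for the entries coming from the initial data) every segment $x_k$ lies in the balls where~\eqref{Lipschitz_fbar} is valid. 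Applying~\eqref{Lipschitz_fbar} to $\bar{f}(x_{k_i},p(x(k_i)))$, with $\|x(k_i)\|\le L_1M(1-b)^{k_i}$ and $\|x_{k_i}\|_{\tau\setminus 0}\le L_1M(1-b)^{k_i-\tau}$ (the largest of $(1-b)^{k_i+s}$ for $s\in\mathbb{N}_{-\tau\setminus 0}$ occurring at $s=-\tau$), gives
\[
\|e(k_i+1)\|\le L_1M(1-b)^{k_i}\Bigl(\bar{L}_{1,1}+\frac{\bar{L}_{1,2}}{(1-b)^\tau}+\bar{L}_2\Bigr),
\]
and by~\eqref{condition} the right-hand side is at most $a(1-b)^{k_i+1}$. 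Hence $\chi(\|e(k_i+1)\|)\le\chi(a(1-b)^{k_i+1})\le\sigma\alpha_1(\|x(k_i+1)\|)+\chi(a(1-b)^{k_i+1})$, i.e.~\eqref{event} does not hold at $k_i+1$.

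\emph{Main obstacle.} The delicate point is the verification in the last paragraph that every trajectory segment $x_k$ stays inside the balls of radius $R=\alpha_1^{-1}(M)$ on which~\eqref{Lipschitz_fbar} (with the constants $\bar{L}_{1,1},\bar{L}_{1,2},\bar{L}_2$) holds; this is exactly what dictates the choice $M=\bar{M}+\tilde{M}$, with $\bar{M}$ in~\eqref{Mbar} calibrated to absorb the cumulative effect of the time-dependent part $a(1-b)^k$ of the threshold. Everything else is bookkeeping: transporting the geometric rate $(1-b)^k$ through $\alpha_1^{-1}$ and through the one-step delay shift $(1-b)^{-\tau}$ so that~\eqref{condition} emerges, and treating the borderline decay rate $k(1-c)^{k-1}$ at $b=c$ by means of the slack constant $\xi$.
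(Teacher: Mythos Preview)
Your argument for global attractivity and for strong nontriviality is correct and follows the paper's proof essentially step for step (the paper phrases nontriviality as a contradiction and yours is direct, but the computation is identical). The treatment of the three cases $b\gtrless c$ and $b=c$ is also the same; your constant in the $b=c$ case is actually sharper than the paper's~\eqref{Mbar} by a factor of $e$, which is harmless since that case is not used for nontriviality.

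There is, however, a genuine gap in your stability argument for $a>0$. You write ``combining convergence with this uniform bound, and arguing as in part~1), one gets GAS,'' but the argument of part~1) does not transfer: there the bound was $\|x(k)\|\le\alpha_1^{-1}(\bar{\alpha}_2(\|\varphi\|_\tau))$, which tends to $0$ with $\|\varphi\|_\tau$. Here the bound is $\|x(k)\|\le\alpha_1^{-1}(M)$ with $M=\bar{M}+\tilde{M}$, and $\bar{M}>0$ is \emph{independent of the initial data} (it depends only on $a,b,c$ and the Lipschitz constant $L$ of $\chi$ on $[0,a]$). So the uniform bound does not shrink as $\|\varphi\|_\tau\to 0$, and convergence alone says nothing about $\sup_k\|x(k)\|$ for small $\varphi$. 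The paper closes this gap with an additional ingredient: from the Lipschitz condition~\eqref{Lipschitz_fbar} one gets a crude growth estimate $\|x(k)\|\le(\bar{L}_{1,1}+\bar{L}_{1,2}+\bar{L}_2)^k\|\varphi\|_\tau$ valid for all $k$. One then locates the crossover time
\[
k^*=\left\lfloor\frac{\ln(\delta/(L_1M))}{\ln\bigl((1-\eta)/(\bar{L}_{1,1}+\bar{L}_{1,2}+\bar{L}_2)\bigr)}\right\rfloor
\]
at which this growing bound meets the decaying bound $L_1M(1-\eta)^k$; for $\|\varphi\|_\tau<\delta$ one has $\|x(k)\|\le L_1M(1-\eta)^{k^*}$ for all $k$, and $k^*\to\infty$ as $\delta\to 0$, which yields Lyapunov stability. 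You should add this step (or an equivalent device) to complete part~2).
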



\begin{proof} {Consider control system~\eqref{ETC.sys}, and denote $x(t):=x(t,0,\varphi)$ its solution.}
It follows from the restriction~\eqref{restrict} and the Lipschitz condition of $\chi$ in Assumption~\ref{Assumption2} that
\begin{equation}\label{restrict1}
\chi(\|e\|) \leq \sigma \alpha_1(\|x\|) +  a L (1-b)^k ,
\end{equation}
for all $k\in\mathbb{N}$. We then conclude from {condition (iii)} of Theorem~\ref{Th.ISS} and~\eqref{restrict1} that
\begin{align*}
V(x_{k+1})-V(x_k) &\leq -\mu V(x_k) + \chi (\|e\|) \cr
               &\leq -\mu V(x_k) + \sigma \alpha_1(\|x\|) + \chi\left( a(1-b)^k \right)\cr
               &\leq -c V(x_k) +  a L (1-b)^k, 
\end{align*}
that is,
\begin{equation}\label{base}
V(x_{k+1}) \leq (1-c) V(x_k) +  a L (1-b)^k ,
\end{equation}
for all $k\in\mathbb{Z}^+$. Using~\eqref{base} for $k$ times yields
{\begin{align}\label{base1}
V(x_{k+1}) &\leq (1-c) \left( (1-c) V(x_{k-1}) +  a L (1-b)^{k-1}  \right) +  a L (1-b)^k  \cr
           &  =  (1-c)^2 V(x_{k-1}) + \sum_{j=0}^{1} a L (1-c)^j (1-b)^{k-j} \cr
           &\leq (1-c)^2 \left( (1-c) V(x_{k-2}) +  a L (1-b)^{k-2}  \right) \cr
           &\hskip3mm + \sum_{j=0}^{1} a L (1-c)^j (1-b)^{k-j} \cr
           &  =  (1-c)^3 V(x_{k-2}) + \sum_{j=0}^{2} a L (1-c)^j (1-b)^{k-j} \cr
           &\leq (1-c)^{k+1} V(x_0) +\sum_{j=0}^{k} a L (1-c)^j (1-b)^{k-j}  ,
\end{align} }
for all $k\in\mathbb{Z}^+$.

If $b>c$, we derive from~\eqref{base1} that
\begin{align}\label{case1}
V(x_{k+1}) &\leq (1-c)^{k+1} \left( V(x_0) + \frac{a L}{1-c} \sum_{j=0}^{k} \left( \frac{1-b}{1-c} \right)^{j} \right) \cr
  &= (1-c)^{k+1} \left( V(x_0) + \frac{a L}{1-c} \frac{1- \left( \frac{1-b}{1-c} \right)^{k+1} }{1- \left( \frac{1-b}{1-c} \right)}  \right) \cr
  &\leq (1-c)^{k+1} \left( V(x_0) + \frac{a L}{b-c}  \right).
\end{align}

Similarly, if $b<c$, we have
\begin{align}\label{case2}
V(x_{k+1}) &\leq (1-b)^{k+1} \left( \left(\frac{1-c}{1-b} \right)^{k+1} V(x_0) + \frac{a L}{1-b} \sum_{j=0}^{k} \left( \frac{1-c}{1-b} \right)^{j} \right) \cr
  &= (1-b)^{k+1} \left( V(x_0) + \frac{a L}{1-b} \frac{1- \left( \frac{1-c}{1-b} \right)^{k+1} }{1- \left( \frac{1-c}{1-b} \right)}  \right) \cr
  &\leq (1-b)^{k+1} \left( V(x_0) + \frac{a L}{c-b}  \right).
\end{align}

If $b=c$, we conclude from~\eqref{base1} that 
\begin{align}\label{case3}
V(x_{k+1}) &\leq (1-c)^{k+1}  V(x_0) + \frac{a L}{1-c} (k+1) (1-c)^{k+1}   \cr
 &\leq (1-c)^{k+1}  V(x_0) + (1-\xi)^{k+1}  \frac{a L}{(1-c) \ln \left( \frac{1-\xi}{1-c} \right)} \cr
 &\leq (1-\xi)^{k+1} \left(  V(x_0) + \frac{a L}{(1-c) \ln \left( \frac{1-\xi}{1-c} \right)}  \right),
\end{align}
where we used the fact
\[
(k+1) \ln\left( \frac{1-\xi}{1-c} \right) \leq \left( \frac{1-\xi}{1-c} \right)^{k+1}
\]
with $0<\xi<c$. Denote
\[
\eta:=\left\{\begin{array}{ll}
\min\{b,c\},~&\textrm{if}~ b\not=c \cr
\xi,~&\textrm{if}~ b=c
\end{array}\right.,
\]
then we conclude from~\eqref{case1},~\eqref{case2}, and~\eqref{case3} that
\begin{equation}\label{Lyapunov.convergence}
V(x_{k+1}) \leq (1-\eta)^{k+1} \left( V(x_0) + \bar{M} \right) \leq M (1-\eta)^{k+1},
\end{equation}
for all $k\in\mathbb{Z}^+$. From the above inequality and conditions (i) and (ii) of Theorem~\ref{Th.ISS}, we get
\begin{equation}\label{bounded}
\|x(k)\|\leq \alpha^{-1}_1 \left(M (1-\eta)^{k}\right),
\end{equation}
for all $k\in\mathbb{Z}^+$, and
\begin{equation}\label{attactive}
\lim_{k\rightarrow\infty} \|x(k)\|\leq \lim_{k\rightarrow\infty} \alpha^{-1}_1 \left(M (1-\eta)^{k}\right) =0.
\end{equation}
Hence, the trivial solution of system~\eqref{CL.sys} is GA. {To show that system~\eqref{CL.sys} is GAS, we prove that system~\eqref{CL.sys} is stable.}


If $a=0$, then $\bar{M}=0$, and $M=\tilde{M}$ depends only on the initial function $\varphi$. We then can conclude from~\eqref{bounded} the stability of system~\eqref{CL.sys} {which also can be derived from \eqref{base} with $a=0$ and Theorem \ref{Th.ISS}.}


{
If $a>0$, we can derive from system~\eqref{ETC.sys} and~\eqref{Lipschitz_fbar} that
\begin{align}\label{xat1}
\|x(1)\|&\leq \bar{L}_{1,1}\|x(0)\| +\bar{L}_{1,2}\|x_0\|_{\tau\setminus 0} + \bar{L}_2 \|x(0)\| \cr
        &\leq \left(\bar{L}_{1,1} +\bar{L}_{1,2} + \bar{L}_2\right) \|\varphi\|_{\tau}.
\end{align}
Using a mathematical induction and~\eqref{xat1}, we conclude that
\begin{align}\label{xatk}
\|x(k)\|<\left\{\begin{array}{ll}
\|\varphi\|_{\tau},~&\textrm{if}~ \bar{L}_{1,1} +\bar{L}_{1,2} + \bar{L}_2<1 \cr
\left(\bar{L}_{1,1} +\bar{L}_{1,2} + \bar{L}_2\right)^k \|\varphi\|_{\tau},~&\textrm{otherwise}
\end{array}\right. 
\end{align}
for all $k\geq 0$. Stability of the closed-loop system follows directly from the above inequality if $\bar{L}_{1,1} +\bar{L}_{1,2} + \bar{L}_2<1$. Thus we next focus on the scenario of  $\bar{L}_{1,1} +\bar{L}_{1,2} + \bar{L}_2\geq 1$. For small positive constant $\delta$ such that $\delta<L_1M$, the following two inequalities
\[
\left(\bar{L}_{1,1} +\bar{L}_{1,2} + \bar{L}_2\right)^{k^*} \delta\leq L_1M(1-\eta)^{k^*}
\]
and
\[
\left(\bar{L}_{1,1} +\bar{L}_{1,2} + \bar{L}_2\right)^{k^*+1} \delta > L_1M(1-\eta)^{k^*+1}
\]
hold with 
\[
k^*= \left\lfloor\frac{\ln\left( \frac{\delta}{L_1M} \right)}{\ln\left( \frac{1-\eta}{\bar{L}_{1,1} +\bar{L}_{1,2} + \bar{L}_2} \right)}\right\rfloor
\]
where $\lfloor\cdot\rfloor$ is the floor function. It then can be concluded from~\eqref{xatk} and~\eqref{bounded} that for $\|\varphi\|_{\tau}< \delta$, we have
\begin{align*}
\|x(k)\|<\left\{\begin{array}{ll}
\left(\bar{L}_{1,1} +\bar{L}_{1,2} + \bar{L}_2\right)^{k^*} \delta,~&\textrm{if}~ k\leq k^* \cr
L_1M(1-\eta)^{k^*},~&\textrm{if}~ k> k^*,
\end{array}\right. 
\end{align*}
which implies that $\|x(k)\|<L_1M(1-\eta)^{k^*}$ for all $k\geq 0$. Therefore, for any $\varepsilon>0$, there exists a positive $\delta$ close enough to zero, depending on $\varepsilon$, such that $k^*$ is big enough so that $L_1M(1-\eta)^{k^*}\leq \varepsilon$, that is, $\|\varphi\|_{\tau}<\delta$ implies
\[
\|x(k)\|<L_1M(1-\eta)^{k^*}\leq \varepsilon ~\textrm{~for ~all~}~ k\geq 0.
\]
Hence, system~\eqref{CL.sys} is stable.
}


In the rest of this proof, we will show that $k_{i+1}-k_i\geq 2$ for all $k\in\mathbb{Z}^+$ provided $b<c$. We do this by a contradiction argument. Suppose that there exists some $i\in\mathbb{Z}^+$ with $k_{i+1}-k_i=1$, i.e., $k_{i+1}=k_i+1$.
{By~\eqref{ETC.sys}, we have that}
\begin{align}\label{contradiction0}
                  & \|x(k_{i+1})-x(k_i)\| \cr
                 =& \|x(k_{i}+1)-x(k_i)\| \cr
                 =& \left\|\bar{f}(x_{k_i},p(x(k_i)))\right\| \cr
             \leq & \bar{L}_{1,1} \|x(k_i)\| + \bar{L}_{1,2} \left\|x_{k_i}\right\|_{\tau\setminus 0} + \bar{L}_{2} \|x(k_i)\| \cr
             \leq & \left(\bar{L}_{1,1}+\bar{L}_2\right) \alpha^{-1}_1\left( M (1-\eta)^{k_i} \right)  +  \bar{L}_{1,2} \alpha^{-1}_1\left( M (1-\eta)^{k_i-\tau} \right) \cr
             \leq & M L_1 (1-\eta)^{k_i} \left( \bar{L}_{1,1} + \frac{\bar{L}_{1,2}}{(1-\eta)^{\tau}} + \bar{L}_2 \right) \cr
             \leq & a(1-b) (1-\eta)^{k_i} \cr
               =  & a(1-b)^{k_i+1} ,
\end{align}
where we used the Lipschitz condition~\eqref{Lipschitz_fbar} in the first inequality of~\eqref{contradiction0}, the Lipschitz condition of $\alpha^{-1}_1$ on the interval $[0,M]$ in the second inequality, inequality~\eqref{condition} in the third inequality, and $\eta=b$ with the condition $b<c$ in the last equality of~\eqref{contradiction0}. 

According to execution rule~\eqref{event} with the definition of $k_{i+1} $, we have
\begin{align*}
\sigma \alpha_1(\|x(k_{i+1})\|) + \chi\left( a(1-b)^{k_{i+1}} \right) &<\chi\left(\|x(k_{i+1})-x(k_{i})\|\right) \cr
     &\leq \chi\left( a(1-b)^{k_i+1} \right),
\end{align*}
{a contradiction, yielding that $k_{i+1}-k_i\geq 2$ for all $i\in\mathbb{Z}^+$.} 
\end{proof}


According to our event-triggering scheme, the control signals are updated when the quantity $\chi(\|e\|)$ of the measurement error goes over the dynamic threshold $\sigma \alpha_1(\|x\|) + \chi\left( a(1-b)^{k} \right)$ which depends on both the system states and the evolution time. If $\sigma>0$ and $a=0$, our execution rule~\eqref{event} becomes
\begin{equation}\label{event1}
\chi(\|e\|) > \sigma \alpha_1(\|x\|) ,
\end{equation}
which has been studied for discrete-time systems without time delays in~\cite{AE-DVD-KJK:2010}. Theorem~\ref{Th1} says that time-delay system~\eqref{CL.sys} is GAS. In this sense, we generalize the results in~\cite{AE-DVD-KJK:2010} for delay-free systems to deal with time-delay systems. Unfortunately, no effective approaches were provided in~\cite{AE-DVD-KJK:2010} to guarantee the nontrivial control updates. Actually, we will show in Example~\ref{ex2} with Fig.~\ref{fig2} that such trivial scenario of control updates indeed exists for some time-delay systems, which implies that the execution rule~\eqref{event1} may trigger the control updates too frequently for certain discrete-time systems. To overcome this problem, the intuitive idea to add the time-dependent part $\chi\big( a(1-b)^k \big)$ in our execution rule is to enlarge the time for the quantity $\chi(\|e\|)$ to evolve from zero to the time of resetting $e$ to zero again. Therefore, the updating of the control signals is most likely triggered less frequently.

The existence of such time-dependent portion in~\eqref{event} is essential to assure the lower bound of inter-execution times is bigger than one. The designable parameters $a$ and $b$ play important roles in the performance of the proposed algorithm (e.g., bound of the system trajectories, convergence speed of the Lyapunov candidate, and the number of the event times). For instance, setting $a$ large reduces the amount of events at the cost of increasing the bound of system trajectories. Setting $b$ large with $b<c$ increases the convergence speed but more events are triggered. If $\sigma=0$ in ~\eqref{event}, we derive the following execution rule
\begin{equation}\label{event2}
\|e(k)\| > a (1-b)^k,
\end{equation}
which relies only on the evolution time $k$. The advantage of the time-dependent event~\eqref{event2} is its simplicity to design and implement. Nevertheless, less control updates may be triggered by execution rule~\eqref{event}. The above discussions are further demonstrated with numerical simulations in Section~\ref{Sec05}.


\begin{remark}\label{remark.Lipschitz}
{The Lipschitz condition on $\bar{f} $ can be replaced with 
\begin{align}\label{Lipschitz1}
\|\bar{f}(\phi,p(x))\| \leq & \bar{L}_1 \|\phi\|_{\tau} + \bar{L}_2 \|x\|
\end{align}
for all $\phi\in \mathcal{B}^{\tau}_R(0)$ and $x\in\mathcal{B}_R(0)$ where $R=\alpha^{-1}_1(M)$ and 
 $\bar{L}_1=\bar{L}_{1,1}+\bar{L}_{1,2}$.
} 
If Lipschitz condition~\eqref{Lipschitz1} holds for $\bar{f}$ instead, then 
the condition~\eqref{condition} in Theorem~\ref{Th1} can be replaced by
\begin{equation}\label{conditionL}
a \geq \frac{M L_1 }{1-b}\left( \frac{\bar{L}_{1}}{(1-b)^{\tau}} + \bar{L}_2 \right).
\end{equation} 
Nevertheless, condition~\eqref{condition} is less conservative than~\eqref{conditionL}, since the latter requires a larger lower bound of the parameter $a$ in the event-triggering condition.
\end{remark}

{We have successfully extended the idea in~\cite{KZ-BG-EB:2022} for continuous-time systems to deal with event-triggered stabilization of discrete-time delay systems. It should be noted that stability of the closed-loop systems is ensured in our result while the the guarantee of stability is missing for the continuous-time systems in~\cite{KZ-BG-EB:2022}. Moreover, the discrete-time control systems do not exhibit Zeno behavior that is a phenomenon exists when infinite many control updates are triggered over a finite time interval. Excluding Zeno behavior is a main difficulty in the design of event-triggered control algorithms for continuous-time systems, while one of the main challenges in this study of discrete-time control systems is to rule out the trivial control updates.} Theorem~\ref{Th1} shows that the execution rule~\eqref{event} guarantees the inter-execution times are bounded from below by two time steps, provided $b<c$ and inequality~\eqref{condition} holds. {Hence, the existence of  strongly nontrivial sequence of event times is guaranteed.}  Under Assumption~\ref{Assumption2}, the Lipschitz constants involved in~\eqref{condition} rely on the tunable parameters $\sigma$, $a$, and $b$. Selecting appropriate $\sigma$, $a$, and $b$ according to~\eqref{condition} could be complicated for general nonlinear systems with time delays. Nevertheless, an easily verifiable condition can be obtained if all the functions mentioned in Assumption~\ref{Assumption2} satisfy global Lipschitz condition (that is, the radius $R$ in Definition~\ref{Lipschitz} is unbounded), and then we can provide a step-by-step guide to tune the parameters $\sigma$, $a$, and $b$ so that~\eqref{condition} is satisfied. More details are provided in the next corollary with its proof.

\begin{corollary}\label{corollary0} Suppose that all the Lipschitz conditions in Assumption~\ref{Assumption2} hold globally, and the following inequality regarding the Lipschitz constants is satisfied
\begin{equation}\label{condition2}
\mu>L L_1 \left( \bar{L}_{1,1} +\bar{L}_{1,2}+ \bar{L}_2 \right),
\end{equation}
then, there exist constants $\sigma\geq 0$, $a>0$, and $b>0$, so that both $\mu-\sigma>b$ and~\eqref{condition} are satisfied.
\end{corollary}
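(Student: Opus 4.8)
The plan is to exploit the standing assumption that every Lipschitz condition in Assumption~\ref{Assumption2} holds globally. Under this assumption the constants $L$ (a global Lipschitz constant for $\chi$), $L_1$ (a global Lipschitz constant for $\alpha^{-1}_1$), and $\bar{L}_{1,1},\bar{L}_{1,2},\bar{L}_2$ from~\eqref{Lipschitz_fbar} can be fixed once and for all, \emph{independently} of the radius $R=\alpha^{-1}_1(M)$, hence independently of the parameters $\sigma$, $a$, $b$ that are still to be chosen. I would first record this, together with the observation that once $b<c$ is imposed the definition of $\bar{M}$ reads $\bar{M}=aL/(c-b)$, so $M=aL/(c-b)+\tilde{M}$. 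I would then simply set $\sigma:=0$, so that $c=\mu$; since~\eqref{condition2} forces $\mu>0$, there is room to pick $b\in(0,\mu)$, whereupon $\mu-\sigma=\mu>b$ holds automatically (and in particular $b<c$). After substituting $M=aL/(\mu-b)+\tilde{M}$ into~\eqref{condition}, the whole task reduces to exhibiting $a>0$ and $b\in(0,\mu)$ for which the single scalar inequality
\[
a\ \ge\ \Phi(a,b):=\frac{L_1}{1-b}\left(\frac{aL}{\mu-b}+\tilde{M}\right)\left(\bar{L}_{1,1}+\frac{\bar{L}_{1,2}}{(1-b)^{\tau}}+\bar{L}_2\right)
\]
holds.

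The next step is a limiting argument in $b$. Set $\kappa:=LL_1\bigl(\bar{L}_{1,1}+\bar{L}_{1,2}+\bar{L}_2\bigr)$, so that~\eqref{condition2} says precisely $\kappa<\mu$. A short computation gives $\Phi(a,0)=\frac{\kappa}{\mu}\,a+\frac{\kappa}{L}\,\tilde{M}$, which is affine in $a$ with slope $\kappa/\mu<1$; consequently $a>\Phi(a,0)$ whenever $a>a_0$, where $a_0:=\frac{\mu\kappa\tilde{M}}{L(\mu-\kappa)}\ge0$. I would fix any such $a$, say $a:=a_0+1$. For this fixed $a$ the map $b\mapsto\Phi(a,b)$ is continuous on $[0,\mu)$ (note $\mu<1$, so $1-b$ and $\mu-b$ remain positive there), so the strict inequality $a>\Phi(a,0)$ persists on a one-sided neighbourhood of $0$: there is $b_*>0$ with $a>\Phi(a,b)$ for all $b\in[0,b_*)$. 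Taking any $b\in\bigl(0,\min\{b_*,\mu\}\bigr)$ then gives $\sigma=0\ge0$, $a>0$, $b>0$, $\mu-\sigma=\mu>b$, and $a\ge\Phi(a,b)$, i.e.\ inequality~\eqref{condition} (in particular part~2) of Theorem~\ref{Th1} applies with these parameters).

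The point I expect to require the most care is the \emph{circular} occurrence of $a$: it appears on the left of~\eqref{condition} and also on the right, through the summand $\bar{M}$ of $M=\bar{M}+\tilde{M}$. This is resolved by noting that $\bar{M}$ is \emph{linear} in $a$ (with coefficient $L/(c-b)$), so $\Phi(\cdot,b)$ remains affine in $a$ for each fixed $b$; and the decisive fact that its limiting slope $\kappa/\mu$ is strictly less than $1$ is exactly what hypothesis~\eqref{condition2} provides --- without it no finite $a$ need satisfy $a\ge\Phi(a,b)$. Everything else is the routine continuity estimate sketched above; if one wished, the admissible ranges of $a$ and $b$ could be made explicit in terms of the problem data, but that is unnecessary for the existence statement.
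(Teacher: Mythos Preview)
Your proof is correct and follows essentially the same three-step route as the paper: first fix $\sigma$ (you take $\sigma=0$, the paper allows any sufficiently small $\sigma\ge 0$), then choose $a$ large enough so that the $b=0$ version of~\eqref{condition} holds strictly, then use continuity to pick $b>0$ small. The paper records these three steps as inequalities~\eqref{condition3}--\eqref{condition5}; your version makes the affine-in-$a$ structure and the threshold $a_0=\mu\kappa\tilde{M}/\bigl(L(\mu-\kappa)\bigr)$ explicit, but the substance is identical.
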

\begin{proof}
We conclude from~\eqref{condition2} that there exists a nonnegative constant $\sigma$ close to zero such that
\begin{equation}\label{condition3}
c:=\mu - \sigma>L L_1 \left( \bar{L}_{1,1} +\bar{L}_{1,2}+ \bar{L}_2 \right).
\end{equation}
Since Assumption~\ref{Assumption2} holds globally, all the Lipschitz constants in~\eqref{condition2} are independent of the choices of $a$, $b$, and the initial function $\varphi$. Then, for a given $\varphi\in\mathcal{C}_{\tau}$, we can find a big enough constant $a>0$ so that
\begin{equation}\label{condition4}
1> L_1 \left( \bar{L}_{1,1} +\bar{L}_{1,2}+ \bar{L}_2 \right) \left( \frac{L}{c} +\frac{\tilde{M}}{a} \right).
\end{equation}
Finally, we can identify a positive constant $b$ smaller than $c$ and close enough to zero  such that
\begin{equation}\label{condition5}
1\geq \frac{L_1}{1-b} \left( \bar{L}_{1,1} +\frac{\bar{L}_{1,2}}{(1-b)^{\tau}}+ \bar{L}_2 \right) \left( \frac{L}{c-b} +\frac{\tilde{M}}{a} \right),
\end{equation}
which, by multiplying both sides with $a$, is equivalent to~\eqref{condition}.
\end{proof}


If all the conditions of Corollary~\ref{corollary0} are satisfied, and Assumption~\ref{Assumption} holds with the functions described in Theorem~\ref{Th1}, then suitable parameters $\sigma$, $a$, and $b$ in the execution rule~\eqref{event} can be derived by solving inequalities~\eqref{condition3},~\eqref{condition4}, and~\eqref{condition5}, orderly and respectively (see Section~\ref{Sec05} for examples with numerical simulations).

\section{The Linear Case}\label{Sec04}

In this section, we apply our results to the following linear time-delay system
\begin{eqnarray}\label{lsys}
\left\{\begin{array}{ll}
x(k+1)= A_1 x(k) + A_2 x(k-\tau) + B u(k)\cr
x_0=\varphi
\end{array}\right.,
\end{eqnarray}
where state $x(k)\in\mathbb{R}^n$, control input $u(k)=K x(k)\in\mathbb{R}^m$ for some $n,m\in \mathbb{N}$, initial function $\varphi\in\mathcal{C}_{\tau}$, and time delay $\tau=1$. Matrices $A_1$, $A_2$, $B$, and $K$ are with appropriate dimensions. The sampled-data implementation of the feedback control is $u(k)=K x(k_i)$, for $k\in[k_i,k_{i+1})$, and system~\eqref{lsys} can be written in the form of~\eqref{CL.sys} with measurement error $e$. The sequence of event times $\{k_i\}_{i\in\mathbb{N}}$ is to be determined by~\eqref{et.time}.

For system~\eqref{lsys}, we have $p(x)=K x$ which is globally Lipschitz and
\begin{align*}
\bar{f}(\phi,u)=&\phi(0)-f(\phi,u) \cr
               =&\phi(0)- \left( A_1 \phi(0) + A_2 \phi(-1) + B u \right) \cr
               =&(I-A_1)\phi(0)- A_2 \phi(-1) - B u.
\end{align*}
{It is easy to see that the function $(\phi,x)\mapsto \bar{f}(\phi,Kx)$  is globally Lipschitz with Lipschitz constants $\bar{L}_{1,1}=\|I-A_1\|$, $\bar{L}_{1,2}=\|A_2\|$, and $\bar{L}_2=\|BK\|$. }

To derive the execution rule~\eqref{event} for system~\eqref{lsys}, we consider the Lyapunov candidate with $V_1(\phi(0))=\|\phi(0)\|$ and $V_2(\phi_{\tau\setminus 0})=\varepsilon \|\phi(-1)\|$, where
\[
\varepsilon=\frac{1}{2}\left( -\|A_1+BK\| +\sqrt{\|A_1+BK\|^2+4\|A_2\|} \right).
\]
Then $V(\phi)=V_1(\phi(0))+V_2(\phi_{\tau\setminus 0})=\|\phi(0)\| + \varepsilon \|\phi(-1)\|$, and conditions (i) and (ii) of Theorem~\ref{Th.ISS} are satisfied with 
\[
\alpha_1(r)=\alpha_2(r)=r \textrm{ and } \alpha_3(r)=\varepsilon r,
\]
for $r\geq 0$. It can be seen that $\alpha^{-1}_1(s)=s$ for $s\geq 0$ is globally Lipschitz on its domain with Lipschitz constant $L_1=1$.

Next, we will check condition (iii) for system~\eqref{lsys} with measurement error $e$. From the discrete-time dynamics of system~\eqref{lsys}, we have
\begin{align*}
&V(x_{k+1})-V(x_k)  \cr
  =  &  \|x(k+1)\|+\varepsilon\|x(k)\|-  \|x(k)\|-\varepsilon\|x(k-1)\| \cr
\leq & \left( \|A_1+BK\| +\varepsilon-1 \right)\|x(k)\| + (\|A_2\|-\varepsilon) \|x(k-1)\| \cr
     & +\|BK\|~\|e(k)\| \cr
  =  & -(1-\varepsilon-\|A_1+BK\|)  V_1(x(k)) - \left(1-\frac{\|A_2\|}{\varepsilon}\right)V_2((x_k)_{\tau\setminus 0}) \cr
     & +\|BK\|~\|e(k)\| \cr
\leq & -\mu V(x_k) +\chi(\|e(k)\|),
\end{align*}
where $\chi(r)=r \|BK\|$ for $r\geq 0$ and
\begin{align*}
\mu=&\min\left\{ 1-\varepsilon-\|A_1+BK\|,  1-\frac{\|A_2\|}{\varepsilon} \right\}\cr
   =& \frac{1}{2}\left(  2- \|A_1+BK\| -\sqrt{\|A_1+BK\|^2 + 4\|A_2\|} \right).
\end{align*}
We can see that the $\mathcal{K}$ class function $\chi$ is globally Lipschitz with Lipschitz constant $L=\|BK\|$.

Up to now, we have shown that Assumption~\ref{Assumption} is true if $\mu>0$, and all the Lipschitz conditions in Assumption~\ref{Assumption2} hold globally. If we require
\begin{align}\label{inequalityl}
& 2 -\|A_1+BK\| - \sqrt{\|A_1+BK\|^2+4\|A_2\|}  \cr
\geq ~& 2\|BK\| (\|I-A_1\| + \|A_2\| + \|BK\|),
\end{align}
then $\mu>0$ and inequality~\eqref{condition2} hold, and we can obtain from Corollary~\ref{corollary0} that there exist constants~$\sigma\geq 0$,~$a>0$, and~$b>0$ so that $\mu-\sigma>b$ and the following inequality is satisfied
\begin{align}\label{conditionl}
1-b\geq \left( \|I-A_1\| +\frac{\|A_2\|}{1-b}+ \|BK\| \right) \left( \frac{\|BK\|}{\mu-\sigma-b} +\frac{\tilde{M}}{a} \right),
\end{align}
which is identical to~\eqref{condition}. Hence, we conclude from the above discussions with Corollary~\ref{corollary0} and Theorem~\ref{Th1} the following result for system~\eqref{lsys}.

\begin{corollary}\label{corollary}
If inequality~\eqref{inequalityl} holds for system~\eqref{lsys}, there exist constants~$\sigma\geq 0$,~$a>0$, and~$b>0$ such that $\mu-\sigma>b$ and~\eqref{conditionl} are both satisfied. Then, the sampled-data implementation of system~\eqref{lsys} with the event times $\{k_i\}_{i\in\mathbb{N}}$ determined by
\begin{equation}\label{et.timel}
k_{i+1}=\min\left\{ k> k_i \mathrel{\Big|} \|e(k)\| > \frac{\sigma}{\|BK\|} \|x(k)\| +  a(1-b)^k  \right\}
\end{equation}
is {GAS}. Moreover, the sequence of event times $\{k_i\}_{i\in\mathbb{N}}$ is strongly nontrivial.
\end{corollary}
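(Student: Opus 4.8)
The plan is to obtain Corollary~\ref{corollary} as a direct specialization of Corollary~\ref{corollary0} and part~2) of Theorem~\ref{Th1} to the linear system~\eqref{lsys}, reusing the Lyapunov candidate and Lipschitz constants computed in the paragraphs just above the statement. First I would consolidate what that computation gives: for~\eqref{lsys} written in the closed-loop form~\eqref{CL.sys}, Assumption~\ref{Assumption} holds with $V(\phi)=\|\phi(0)\|+\varepsilon\|\phi(-1)\|$, $\alpha_1(r)=\alpha_2(r)=r$, $\alpha_3(r)=\varepsilon r$, $\chi(r)=\|BK\|\,r$, and $\mu=\tfrac12\big(2-\|A_1+BK\|-\sqrt{\|A_1+BK\|^2+4\|A_2\|}\big)$ provided $\mu>0$; and all Lipschitz conditions in Assumption~\ref{Assumption2} hold globally with $L_1=1$, $L=\|BK\|$, $\bar L_{1,1}=\|I-A_1\|$, $\bar L_{1,2}=\|A_2\|$, and $\bar L_2=\|BK\|$. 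This step is bookkeeping only.

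Next I would observe that, after substituting these constants, inequality~\eqref{condition2} of Corollary~\ref{corollary0} becomes $\mu>\|BK\|\big(\|I-A_1\|+\|A_2\|+\|BK\|\big)$, which upon multiplication by $2$ and insertion of the closed form of $\mu$ is precisely hypothesis~\eqref{inequalityl}; since the right-hand side of~\eqref{inequalityl} is nonnegative, \eqref{inequalityl} also forces $\mu>0$ (here I would note the standing assumption $\|BK\|>0$, which is needed for~\eqref{et.timel} to be well posed). Hence all hypotheses of Corollary~\ref{corollary0} are in force, and it yields constants $\sigma\ge0$, $a>0$, $b>0$ with $\mu-\sigma>b$ for which the abstract condition~\eqref{condition} holds. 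Substituting $L_1=1$ together with $\bar L_{1,1},\bar L_{1,2},\bar L_2$ and $L$ into~\eqref{condition} and dividing by $a$, exactly as in the proof of Corollary~\ref{corollary0}, rewrites it verbatim as~\eqref{conditionl} with $c=\mu-\sigma$; this establishes the first assertion of the corollary.

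Finally I would verify that, with this $\sigma$, $a$, $b$, all hypotheses of part~2) of Theorem~\ref{Th1} are met for~\eqref{lsys}: Assumptions~\ref{Assumption} and~\ref{Assumption2} hold, $\mu>\sigma\ge0$, $a>0$, $1>b>0$, $b<c$, and~\eqref{conditionl} (which is~\eqref{condition} after the substitution) holds. It then remains to match the triggering rules: in~\eqref{et.time} the event $\chi(\|e(k)\|)>\sigma\alpha_1(\|x(k)\|)+\chi\big(a(1-b)^k\big)$ becomes $\|BK\|\,\|e(k)\|>\sigma\|x(k)\|+\|BK\|\,a(1-b)^k$, and dividing by $\|BK\|>0$ reproduces exactly the event defining~\eqref{et.timel}. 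Invoking part~2) of Theorem~\ref{Th1} then gives that the closed-loop system~\eqref{CL.sys}, i.e.\ the sampled-data implementation of~\eqref{lsys}, is GAS and that $\{k_i\}_{i\in\mathbb{N}}$ is strongly nontrivial, which completes the proof.

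I do not anticipate a genuine obstacle: the argument is a substitution into already-established general results, and the only care required is the bookkeeping that confirms~\eqref{condition2}, \eqref{condition}, and the abstract event~\eqref{et.time} collapse respectively to~\eqref{inequalityl}, \eqref{conditionl}, and~\eqref{et.timel} once the specific $V$, $\alpha_1$, $\chi$, and Lipschitz constants are inserted — together with the remark that $\|BK\|\neq0$ is implicitly assumed so that the threshold in~\eqref{et.timel} is meaningful.
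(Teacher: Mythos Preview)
Your proposal is correct and mirrors the paper's own argument essentially verbatim: the paper also derives Corollary~\ref{corollary} by substituting the linear-case Lyapunov data and Lipschitz constants into Corollary~\ref{corollary0} (showing~\eqref{inequalityl} is~\eqref{condition2} in this setting) and then invoking part~2) of Theorem~\ref{Th1} after identifying~\eqref{condition} with~\eqref{conditionl} and~\eqref{et.time} with~\eqref{et.timel}. Your additional remark that $\|BK\|>0$ is implicitly required is a useful clarification the paper leaves tacit.
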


\section{Examples}\label{Sec05}

In this section, two examples are investigated to illustrate the effectiveness of the obtained results with our event-triggering scheme. In the first example, we apply our results to a linear time-delay system.

\begin{example}\label{ex1}
Consider control system~\eqref{lsys} with
\[
A_1=\begin{bmatrix}
   0.95  &  0 \\
   0.01 &  1.05
\end{bmatrix}, ~A_2=\begin{bmatrix}
    0  &  -0.01 \\
    -0.01  &  0
\end{bmatrix}, 
\]
\[
B=\begin{bmatrix}
    3  &  0.2 \\
    0.5  &  1
\end{bmatrix}, ~K=\begin{bmatrix}
     -0.1621  &  ~~0.0324 \\
    ~~0.0810  &  -0.4862
\end{bmatrix}.
\]
\end{example}
Based on the discussion in Section~\ref{Sec04}, we have $\mu\approx 0.4030$ and~\eqref{inequalityl} holds. Since $\mu<1$, we can conclude from Theorem~\ref{Th.ISS} that feedback control system~\eqref{lsys} with the above given parameters is GAS. To derive the event times $\{k_i\}_{i\in\mathbb{N}}$ from~\eqref{et.timel}, the parameters $\sigma$, $a$, and $b$ in~\eqref{et.timel} can be chosen so that inequality~\eqref{conditionl} holds. Following the proving process of Corollary~\ref{corollary0}, all the parameters considered in this example are selected with initial condition $\varphi(s)=[1 ~1]^T$ for $s\in\mathbb{N}_{\tau}$ so that~\eqref{conditionl} is satisfied. 

For different combinations of $a$ and $b$, Table \ref{table} indicates the number of event times on the time interval $[0,10^4]$ when $\sigma=0.1$. We can see that increasing $b$ with fixed $a$ leads to a substantial increase of the event times on $[0,10^4]$, while increasing $a$ with unchanged $b$ slightly reduces the number of control updates over this finite time period. The reason that changing the value of $b$ affects more on the amount of event times is that the convergence speed of the Lyapunov candidate is closely related to $b$ (see~\eqref{Lyapunov.convergence} with the facts $b<c$ and $b=\eta$).

Table~\ref{table2} shows a performance comparison on the amount of event times over the time period $[0,10^5]$ between the execution rule~\eqref{event} and the time-dependent rule~\eqref{event2}. It can be observed that much less control updates are triggered by~\eqref{event}, which depends on both the system states and the evolution time. We further demonstrate the comparison in Fig.~\ref{fig1}. It can be observed that the error norm stays underneath the corresponding threshold in each subfigure of Fig.~\ref{fig1}. We can also see that more events are triggered in Fig.~\ref{fig1b}. The reason is that the execution rule~\eqref{event2} allows shorter time for the measurement error to evolve from zero to the time-dependent threshold. Corollary~\ref{corollary} states that the sequence of event times is strongly nontrivial, and this can be verified by Fig.~\ref{fig1} in which all the inter-event times in interval $[0,200]$ are larger than one.


\vskip2mm

In the next example, we investigate a scalar control system to verify the effectiveness of our results on nonlinear time-delay systems.

\begin{table}[!t]
\caption{Number of the Event Times Determined by~\eqref{et.timel}  on the Time Interval $[0,10^4]$ with $\sigma=0.1$. {The first column under the category `Number of event times' is for system~\eqref{ex1} with initial condition $\varphi(s)=[1 ~1]^T$ for $s\in\mathbb{N}_{\tau}$, while the second column is derived with initial condition $\varphi(s)=[-2 ~3]^T$ for $s\in\mathbb{N}_{\tau}$}. }
\label{table}
\centering
\scalebox{1.}{
\begin{tabular}{ |c|c|c|c| } 
\hline
$a$ & $b$ & \multicolumn{2}{c|}{Number of event times} \\ 
\hline
$16$ & 0.01 & 2135 & {2141}\\ \cline{1-4}
$16$ & 0.03 & 2317 & {2323}\\ \cline{1-4} 
$24$ & 0.03 & 2315 & {2320}\\ \cline{1-4}
\hline
\end{tabular} }
\end{table}

\begin{table}[!t]
\caption{Number of the Event Times Determined by~\eqref{et.timel} on the Time Interval $[0,10^5]$. {The first column regarding to the category of `Number of event times' is for system~\eqref{ex1} with initial condition $\varphi(s)=[1 ~1]^T$ for $s\in\mathbb{N}_{\tau}$, and the second column is derived with initial condition $\varphi(s)=[-2 ~3]^T$ for $s\in\mathbb{N}_{\tau}$}.}
\label{table2}
\centering
\scalebox{1.}{
\begin{tabular}{ |c|c|c| } 
\hline
$a$ & \multicolumn{2}{|c|}{$16$}  \\ \cline{1-3}
$b$ & \multicolumn{2}{|c|}{$0.01$}  \\ \cline{1-3}
Number of event times with $\sigma=0.1$ & 15845 & {15857 } \\ \cline{1-3}
Number of event times with $\sigma=0$~~   & 17373 & {17369 }\\ \cline{1-3}
\hline
\end{tabular} }
\end{table}

\begin{figure}[!t]
\centering
\subfigure[$\sigma=0.1$]{\label{fig1a}\includegraphics[width=3.45in]{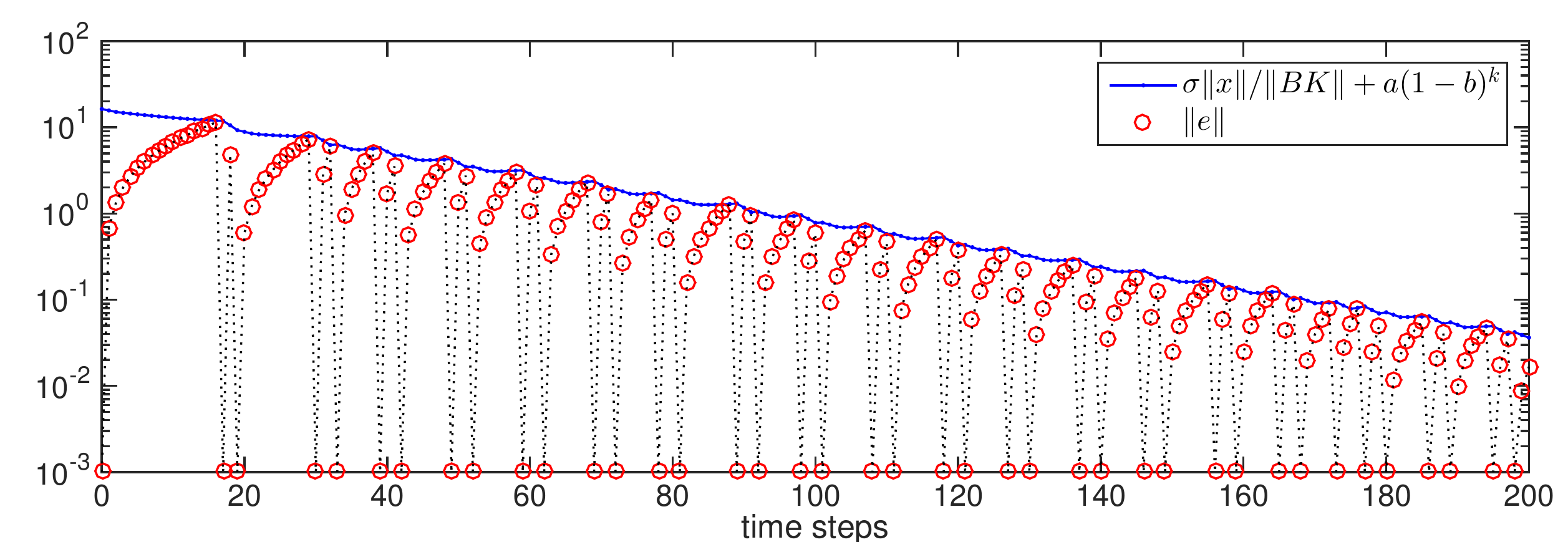}}
\subfigure[$\sigma=0$]{\label{fig1b}\includegraphics[width=3.45in]{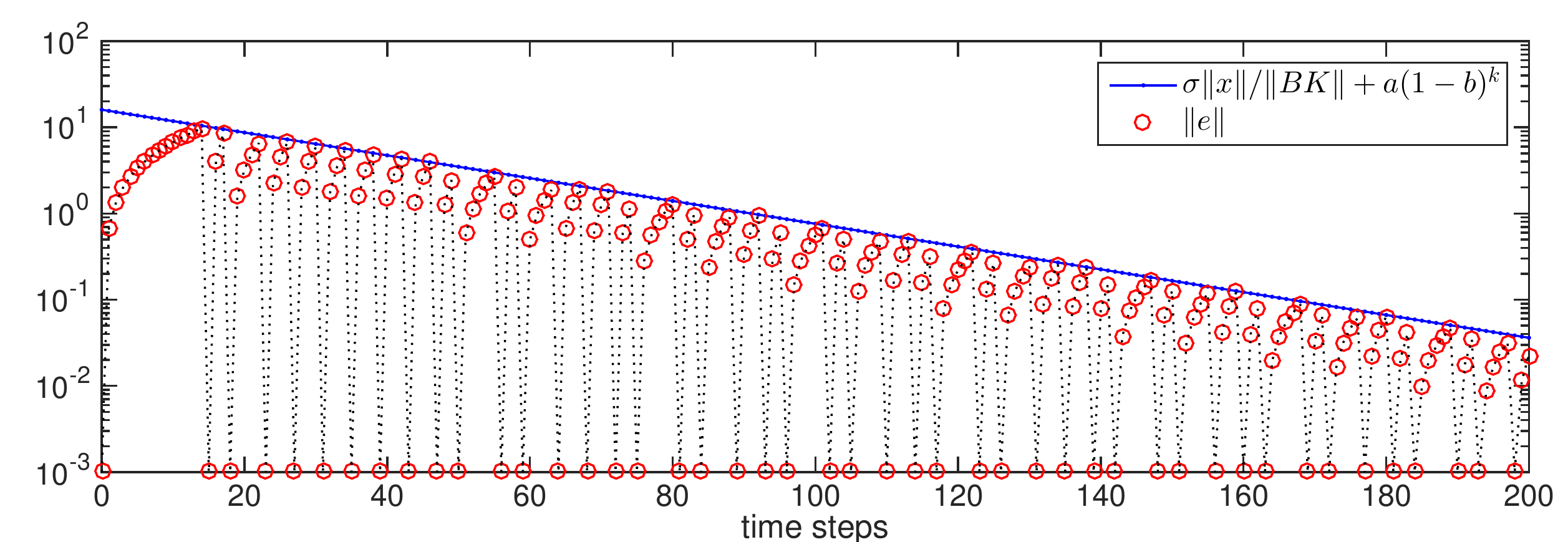}}
\caption{Simulations of Example~\ref{ex1} with $a=16$ and $b=0.03$. The red stems denote the evolution of the error norm $\|e\|$ which is traced by the black dotted line. The red stems on the time axis also indicate the event times. The blue lines represent the dynamic threshold $\sigma\|x\|/\|BK\| + a(1-b)^k$.}
\label{fig1}
\end{figure}

\begin{example}\label{ex2}
Consider the following discrete-time control system with time delay 
\begin{eqnarray}\label{sys.ex2}
\hskip-7mm
\left\{\begin{array}{ll}
x(k+1)= A_1 x(k) + A_2 \cos(x(k)) \sin(x(k-\tau)) + B u(k)\cr
x_0=\varphi \cr
\end{array}\right.,
\end{eqnarray}
where $x(k)\in\mathbb{R}$, $A_1=1$, $A_2=0.05$, $B=2$, time delay $\tau=1$, control input $u(k)=Kx(k)$ with control gain $K=-0.3$, and initial condition $\varphi(s)=0.2$ for $s\in [-\tau,0]$.
\end{example}

Similarly to the analysis of system~\eqref{lsys}, we can derive from the dynamics of control system~\eqref{sys.ex2} the Lipschitz conditions $\bar{L}_{1,1}=|1-A_1|$, $\bar{L}_{1,2}=|A_2|$, and $\bar{L}_2=|BK|$. Consider the Lyapunov candidate $V(\phi)=V_1(\phi(0))+V_2(\phi_{\tau\setminus 0})$ with $V_1(\phi(0))=|\phi(0)|$ and $V_2(\phi_{\tau \setminus 0})=\epsilon|\phi(-1)|$, where constant $\epsilon=0.1$, and then conditions (i) and (ii) of Theorem~\ref{Th.ISS} hold with $\alpha_1(|\phi(0)|)=\alpha_2(|\phi(0)|)=|\phi(0)|$ with Lipschitz constant $L=1$ and $\alpha_3(|\phi|_{\tau\setminus 0})=\epsilon|\phi|_{\tau\setminus 0}$. Following the discrete dynamics of system~\eqref{sys.ex2} with the event-triggered implementation, we can show that condition (iii) of Theorem~\ref{Th.ISS} is satisfied with $\mu=\min\{1-\epsilon-|A_1+BK|, 1-|A_2|/\epsilon\}$ and $\chi(|e|)=|BK||e|$ which has Lipschitz constant $L=|BK|$. In the simulation, we select $a=2.2$, $b=0.02$, and $\sigma=0.05$ so that both $b<\mu-\sigma$ and~\eqref{condition} are satisfied. Hence, Theorem~\ref{Th1} concludes that the event-triggered implementation of control system~\eqref{sys.ex2} with triggering condition~\eqref{event} is GAS, and the sequence of event times is strongly nontrivial.

Fig.~\ref{fig2} shows evolution of the measurement error and the dynamic thresholds with the event times determined by~\eqref{event} and the state-dependent execution rule~\eqref{event1}, respectively. In Fig.~\ref{fig2a}, we can see that the error norm never surpasses the event-triggering threshold $\sigma|x|/|BK|+a(1-b)^k$, and all the inter-execution times over the time interval $[0,200]$ are at least two, which is in accordance with our theoretical results. It can be observed from Fig.~\ref{fig2b} that the error $e$ stays zero for all $k\in \mathbb{Z}^+$, that is, the control input $u$ is updated at every time step. Therefore, the state-dependent execution rule~\eqref{event1} reduces the event-triggering implementation to the traditional feedback control mechanism. {Fig.~\ref{fig2cd} shows the trajectories of system state and control input for system~\eqref{ex2} with the event times determined by~\eqref{event}. It can be observed that control updates are permitted only when the events are detected. This is the major difference from time-triggered control paradigm, such as periodic or aperiodic sampled-data control, which requires the control updates to be executed according to a predetermined schedule (see, e.g, \cite{RP-DN:2016}).}

\begin{figure}[!t]
\centering
\subfigure[$a=2.2$]{\label{fig2a}\includegraphics[width=3.45in]{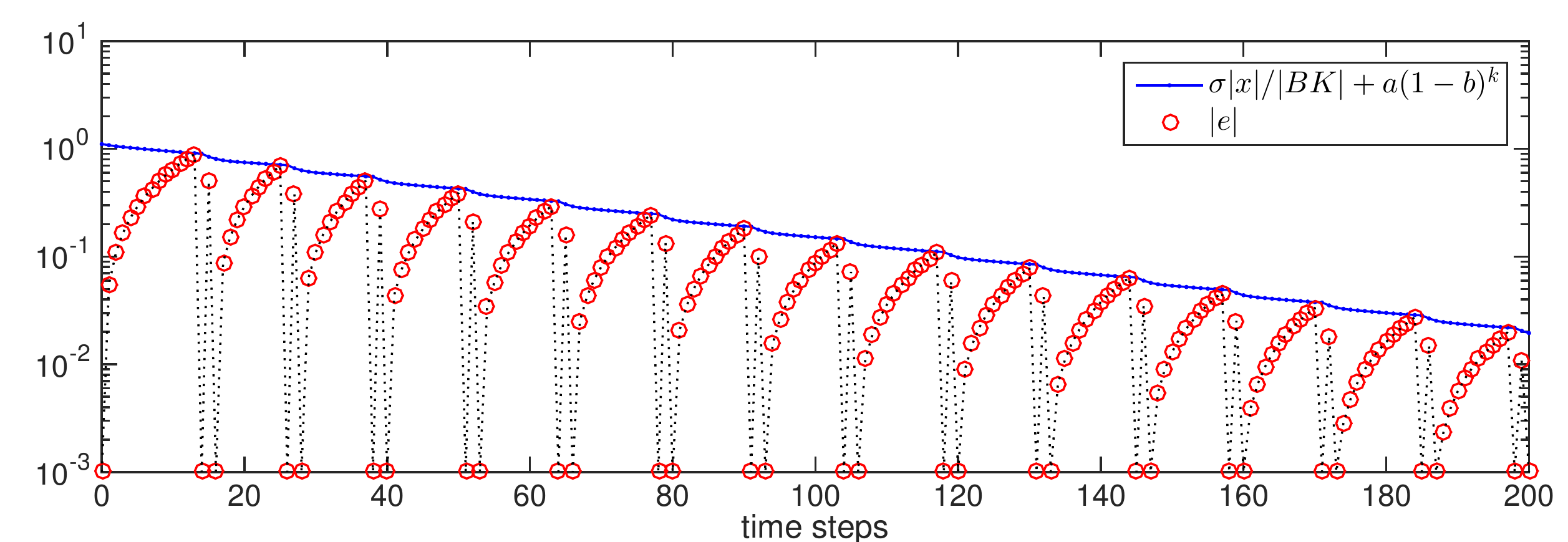}}
\subfigure[$a=0$]{\label{fig2b}\includegraphics[width=3.45in]{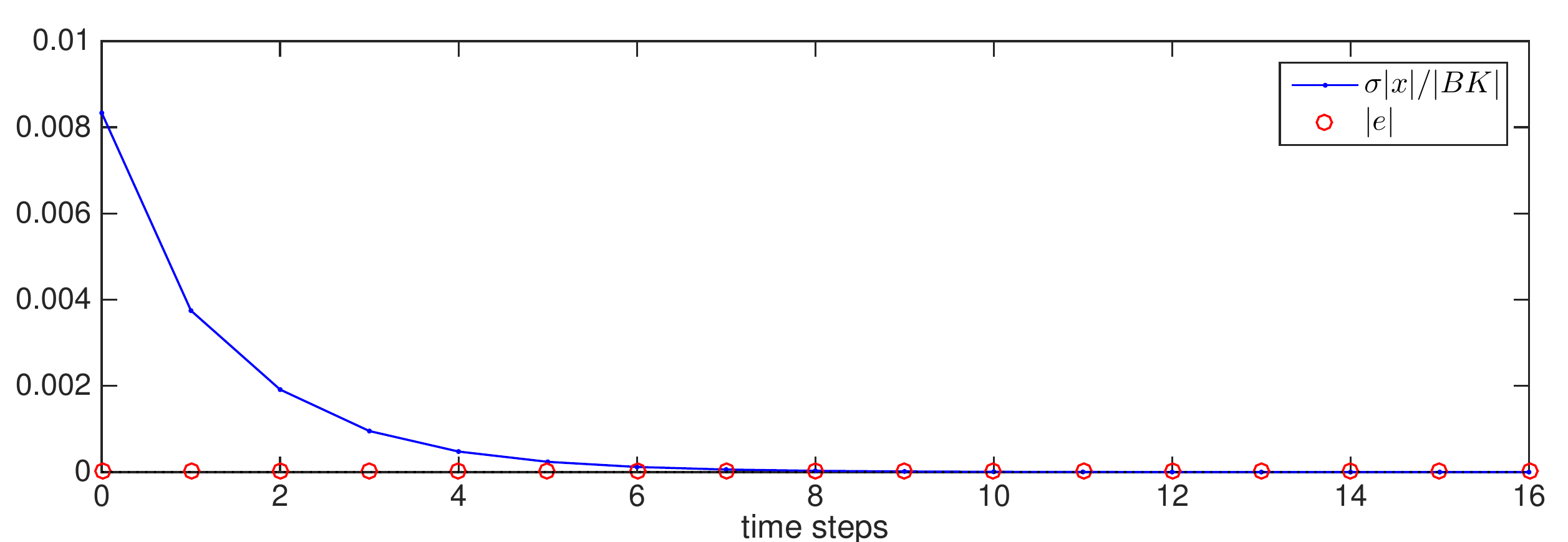}}
\caption{ Simulations of Example~\ref{ex2} with $\sigma=0.05$, $b=0.02$, and initial function $\varphi(s)=0.2$ for $s\in\mathbb{N}_{-\tau}$. The evolution of the error norm $\|e\|$ is traced by the black dotted line.}
\label{fig2}
\end{figure}

\begin{figure}[!t]
\centering
\subfigure[System state $x$]{\label{fig2c}\includegraphics[width=3.45in]{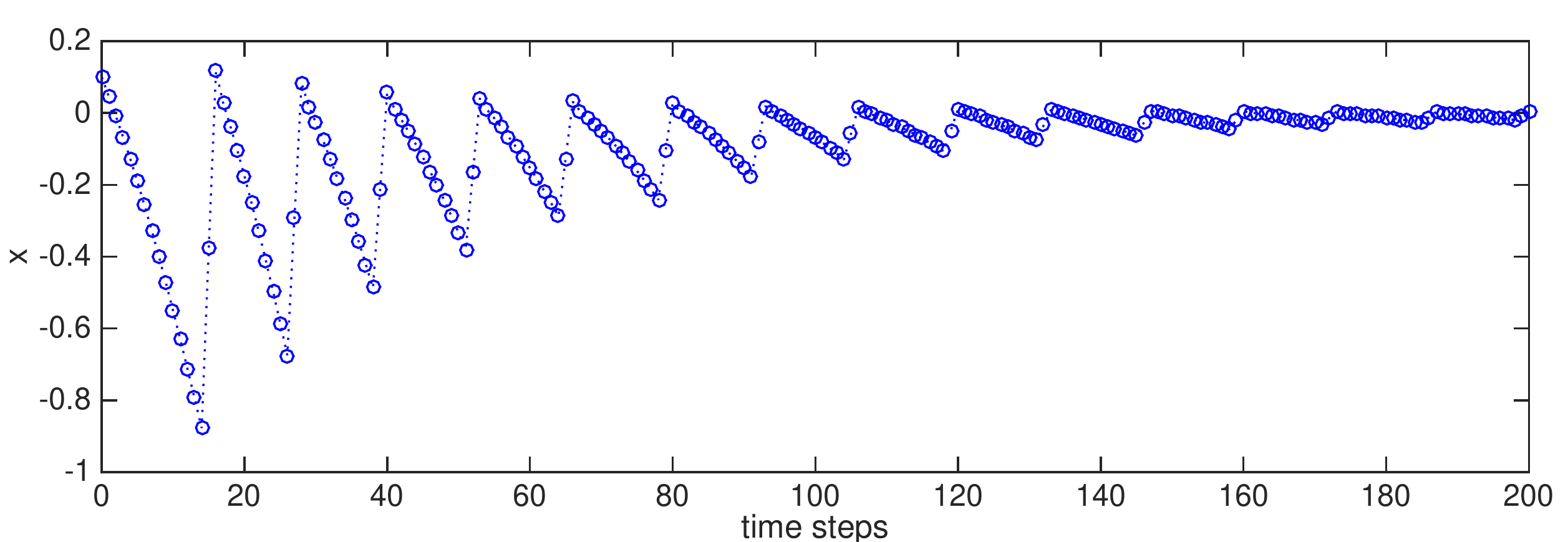}}
\subfigure[Control input $u$]{\label{fig2d}\includegraphics[width=3.45in]{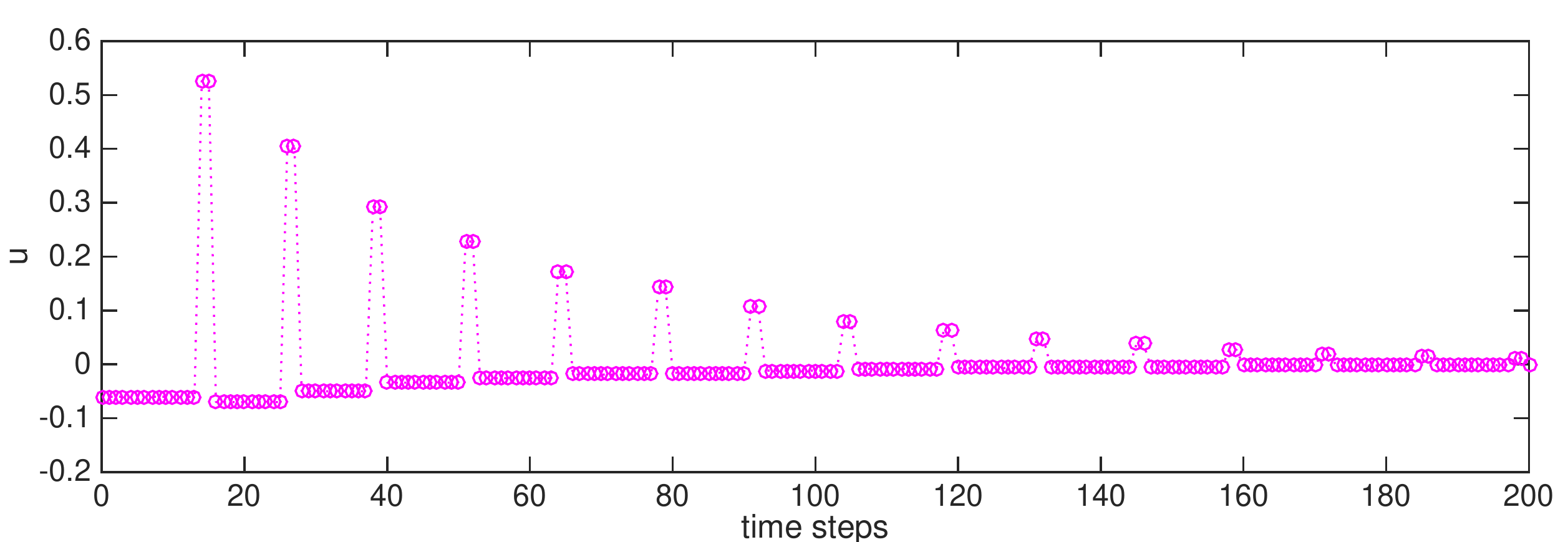}}
\caption{Simulations of the system state and control input for Example~\ref{ex2} with $\sigma=0.05$, $a=2.2$, $b=0.02$, and initial function $\varphi(s)=0.2$ with $s\in\mathbb{N}_{-\tau}$. These circles represent the system states (or control inputs) which are traced by dotted lines.}
\label{fig2cd}
\end{figure}

\section{Conclusions}\label{Sec06}

In this paper, we have introduced an event-triggering scheme to update the control inputs for discrete-time systems with time delays. Sufficient conditions have been derived to guarantee the event-triggered control systems are {globally asymptotically stable}. Moreover, the lower bound of the inter-execution times has been proved to be bigger than one, which excludes the traditional feedback control of updating the input signals at every time step from our event-triggering scheme. 

{Avenues of future work include:
\begin{itemize}
\item Investigating the implications of the proposed algorithm in consensus and synchronization problems over networks;
\item {Improving the existing ISS results for linear time-delay systems and deriving the event-triggering algorithm accordingly;}
\item {Exploring new event-triggering algorithms} along the line of the study in~\cite{RP-PT-DN-AA:2014,WW-DN-RP-WH:2020} by converting the time-dependent portion of the event-triggering threshold in~\eqref{event} to a threshold variable regulated by a difference equation;
\item Considering time-delay effects not only in the system dynamics but also in the event-triggered feedback controllers.
\end{itemize}
}







\end{document}